\newdimen\proofrulebreadth \proofrulebreadth=.05em
\newdimen\proofdotseparation \proofdotseparation=1.25ex
\newdimen\proofrulebaseline \proofrulebaseline=2ex
\let\then\relax
\def\hfi{\hskip0pt plus.0001fil}
\mathchardef\squigto="3A3B
\newif\ifinsideprooftree\insideprooftreefalse
\newif\ifonleftofproofrule\onleftofproofrulefalse
\newif\ifproofdots\proofdotsfalse
\newif\ifdoubleproof\doubleprooffalse
\let\wereinproofbit\relax
\newdimen\shortenproofleft
\newdimen\shortenproofright
\newdimen\proofbelowshift
\newbox\proofabove
\newbox\proofbelow
\newbox\proofrulename
\def\shiftproofbelow{\let\next\relax\afterassignment\setshiftproofbelow\dimen0 }
\def\shiftproofbelowneg{\def\next{\multiply\dimen0 by-1 }%
\afterassignment\setshiftproofbelow\dimen0 }
\def\setshiftproofbelow{\next\proofbelowshift=\dimen0 }
\def\setproofrulebreadth{\proofrulebreadth}
\def\prooftree{
%
\ifnum  \lastpenalty=1
\then   \unpenalty
\else   \onleftofproofrulefalse
\fi
%
\ifonleftofproofrule
\else   \ifinsideprooftree
        \then   \hskip.5em plus1fil
        \fi
\fi
%
\bgroup
\setbox\proofbelow=\hbox{}\setbox\proofrulename=\hbox{}%
\let\justifies\proofover\let\leadsto\proofoverdots\let\Justifies\proofoverdbl
\let\using\proofusing\let\[\prooftree
\ifinsideprooftree\let\]\endprooftree\fi
\proofdotsfalse\doubleprooffalse
\let\thickness\setproofrulebreadth
\let\shiftright\shiftproofbelow \let\shift\shiftproofbelow
\let\shiftleft\shiftproofbelowneg
\let\ifwasinsideprooftree\ifinsideprooftree
\insideprooftreetrue
%
\setbox\proofabove=\hbox\bgroup$\displaystyle 
\let\wereinproofbit\prooftree
%
\shortenproofleft=0pt \shortenproofright=0pt \proofbelowshift=0pt
%
\onleftofproofruletrue\penalty1
}
\def\eproofbit{
%
\ifx    \wereinproofbit\prooftree
\then   \ifcase \lastpenalty
        \then   \shortenproofright=0pt  
        \or     \unpenalty\hfil         
        \or     \unpenalty\unskip       
        \else   \shortenproofright=0pt  
        \fi
\fi
%
\global\dimen0=\shortenproofleft
\global\dimen1=\shortenproofright
\global\dimen2=\proofrulebreadth
\global\dimen3=\proofbelowshift
\global\dimen4=\proofdotseparation
\global\count255=\proofdotnumber
%
$\egroup  
%
\shortenproofleft=\dimen0
\shortenproofright=\dimen1
\proofrulebreadth=\dimen2
\proofbelowshift=\dimen3
\proofdotseparation=\dimen4
\proofdotnumber=\count255
}
\def\proofover{
\eproofbit 
\setbox\proofbelow=\hbox\bgroup 
\let\wereinproofbit\proofover
$\displaystyle
}%
\def\proofoverdbl{
\eproofbit 
\doubleprooftrue
\setbox\proofbelow=\hbox\bgroup 
\let\wereinproofbit\proofoverdbl
$\displaystyle
}%
\def\proofoverdots{
\eproofbit 
\proofdotstrue
\setbox\proofbelow=\hbox\bgroup 
\let\wereinproofbit\proofoverdots
$\displaystyle
}%
\def\proofusing{
\eproofbit 
\setbox\proofrulename=\hbox\bgroup 
\let\wereinproofbit\proofusing
\kern0.3em$
}
\def\endprooftree{
\eproofbit 
  \dimen5 =0pt
%
\dimen0=\wd\proofabove \advance\dimen0-\shortenproofleft
\advance\dimen0-\shortenproofright
%
\dimen1=.5\dimen0 \advance\dimen1-.5\wd\proofbelow
\dimen4=\dimen1
\advance\dimen1\proofbelowshift \advance\dimen4-\proofbelowshift
%
\ifdim  \dimen1<0pt
\then   \advance\shortenproofleft\dimen1
        \advance\dimen0-\dimen1
        \dimen1=0pt
        \ifdim  \shortenproofleft<0pt
        \then   \setbox\proofabove=\hbox{%
                        \kern-\shortenproofleft\unhbox\proofabove}%
                \shortenproofleft=0pt
        \fi
\fi
%
\ifdim  \dimen4<0pt
\then   \advance\shortenproofright\dimen4
        \advance\dimen0-\dimen4
        \dimen4=0pt
\fi
%
\ifdim  \shortenproofright<\wd\proofrulename
\then   \shortenproofright=\wd\proofrulename
\fi
%
\dimen2=\shortenproofleft \advance\dimen2 by\dimen1
\dimen3=\shortenproofright\advance\dimen3 by\dimen4
%
\ifproofdots
\then
        \dimen6=\shortenproofleft \advance\dimen6 .5\dimen0
        \setbox1=\vbox to\proofdotseparation{\vss\hbox{$\cdot$}\vss}%
        \setbox0=\hbox{%
                \advance\dimen6-.5\wd1
                \kern\dimen6
                $\vcenter to\proofdotnumber\proofdotseparation
                        {\leaders\box1\vfill}$%
                \unhbox\proofrulename}%
\else   \dimen6=\fontdimen22\the\textfont2 
        \dimen7=\dimen6
        \advance\dimen6by.5\proofrulebreadth
        \advance\dimen7by-.5\proofrulebreadth
        \setbox0=\hbox{%
                \kern\shortenproofleft
                \ifdoubleproof
                \then   \hbox to\dimen0{%
                        $\mathsurround0pt\mathord=\mkern-6mu%
                        \cleaders\hbox{$\mkern-2mu=\mkern-2mu$}\hfill
                        \mkern-6mu\mathord=$}%
                \else   \vrule height\dimen6 depth-\dimen7 width\dimen0
                \fi
                \unhbox\proofrulename}%
        \ht0=\dimen6 \dp0=-\dimen7
\fi
%
\let\doll\relax
\ifwasinsideprooftree
\then   \let\VBOX\vbox
\else   \ifmmode\else$\let\doll=$\fi
        \let\VBOX\vcenter
\fi
\VBOX   {\baselineskip\proofrulebaseline \lineskip.2ex
        \expandafter\lineskiplimit\ifproofdots0ex\else-0.6ex\fi
        \hbox   spread\dimen5   {\hfi\unhbox\proofabove\hfi}%
        \hbox{\box0}%
        \hbox   {\kern\dimen2 \box\proofbelow}}\doll%
%
\global\dimen2=\dimen2
\global\dimen3=\dimen3
\egroup 
\ifonleftofproofrule
\then   \shortenproofleft=\dimen2
\fi
\shortenproofright=\dimen3
%
\onleftofproofrulefalse
\ifinsideprooftree
\then   \hskip.5em plus 1fil \penalty2
\fi
}
\newcommand{\xyline}[2][]{\ensuremath{\smash{\xymatrix@1#1{#2}}}}
\newcommand{\xyinline}[2][]{\ensuremath{\smash{\xymatrix@1#1{#2}}}^{\rule[8.5pt]{0pt}{0pt}}}
\newif\ifignore 
\newcommand{\auxproof}[1]{
\ifignore\mbox{}\newline
\textbf{PROOF:} \dotfill\newline
{\it #1}\mbox{}\newline
\textbf{ENDPROOF}\dotfill
\fi}
\newtheorem{theorem}{Theorem}
\newtheorem{lemma}[theorem]{Lemma}
\newtheorem{proposition}[theorem]{Proposition}
\newtheorem{definition}[theorem]{Definition}
\newtheorem{example}[theorem]{Example}
\newenvironment{proof}[1][Proof]%
   { \begin{trivlist}%
     \item[\hskip \labelsep {\bfseries #1}]%
   }%
   { \end{trivlist}%
   }
\newcommand{\QEDbox}{\square}
\newcommand{\QED}{\hspace*{\fill}$\QEDbox$}
\newcommand{\after}{\mathrel{\circ}}
\newcommand{\cat}[1]{\ensuremath{\mathbf{#1}}}
\newcommand{\Cat}[1]{\ensuremath{\mathbf{#1}}}
\newcommand{\op}{\ensuremath{^{\mathrm{op}}}}
\newcommand{\idmap}[1][]{\ensuremath{\mathrm{id}_{#1}}}
\newcommand{\support}{\ensuremath{\mathrm{supp}}}
\newcommand{\st}{\ensuremath{\mathsf{st}}}
\newcommand{\Hom}{\textsl{Hom}}
\newcommand{\PrimeFilter}{\textsl{pFil}}
\newcommand{\inprod}[2]{\ensuremath{\langle #1\,|\,#2 \rangle}}
\newcommand{\KSub}{\ensuremath{\mathrm{KSub}}}
\newcommand{\Alg}{\textsl{Alg}\xspace}
\newcommand{\Mlt}{\ensuremath{\mathcal{M}}}
\newcommand{\Dstr}{\ensuremath{\mathcal{D}}}
\newcommand{\NNO}{\ensuremath{\mathbb{N}}}
\newcommand{\unitR}{\ensuremath{[0,1]_{\mathbb{R}}}}
\newcommand{\nonnegR}{\ensuremath{\mathbb{R}_{\geq 0}}}
\newcommand{\orthogonal}{\mathrel{\bot}}
\newcommand{\scalar}{\mathrel{\bullet}}
\newcommand{\powerset}{\mathcal{P}}
\newcommand{\powersetfin}{\mathcal{P}_{\mathit{fin}}}
\newcommand{\powersetnefin}{\mathcal{P}_{\mathit{fin}}^{+}}
\newcommand{\leftScottint}{[{\kern-.3ex}[}
\newcommand{\rightScottint}{]{\kern-.3ex}]}
\newcommand{\Hilb}{\Cat{Hilb}\xspace}
\newcommand{\Sets}{\Cat{Sets}\xspace}
\newcommand{\cotuple}[1]{\ensuremath{[ #1 ]}}
\newcommand{\tuple}[1]{\ensuremath{\langle #1 \rangle}}
\newcommand{\set}[2]{\{#1\;|\;#2\}}
\newcommand{\setin}[3]{\{#1\in#2\;|\;#3\}}
\newcommand{\conjun}{\mathrel{\wedge}}
\newcommand{\disjun}{\mathrel{\vee}}
\newcommand{\dirjoin}{\bigvee\nolimits^{\uparrow}}
\newcommand{\all}[2]{\forall_{#1}.\,#2}
\newcommand{\ex}[2]{\exists_{#1}.\,#2}
\newcommand{\lam}[2]{\lambda{#1}.\,#2}
\newcommand{\EndoHom}[1]{{\cal E}{\kern-.5ex}\textit{n}{\kern-.2ex}\textit{d}{\kern-.2ex}\textit{o}(#1)}
\title{Duality for Convexity}
\author{Bart Jacobs, \\
{\small Institute for Computing and Information Sciences (iCIS),} \\[-.5em]
{\small Radboud University Nijmegen, The Netherlands.} \\[-.5em]
{\small Webaddress: \url{www.cs.ru.nl/B.Jacobs}}}
\date{\small \today}
\begin{document}
\maketitle

\begin{abstract}
  This paper studies convex sets categorically, namely as algebras of
  a distribution monad. It is shown that convex sets occur in two dual
  adjunctions, namely one with preframes via the Boolean truth values
  $\{0,1\}$ as dualising object, and one with effect algebras via the
  (real) unit interval $\unitR$ as dualising object. These effect
  algebras are of interest in the foundations of quantum mechanics.
\end{abstract}

\section{Introduction}\label{IntroSec}

A set $X$ is commonly called convex if for each pair of elements
$x,y\in X$ and each number $r\in\unitR$ in the unit interval of real
numbers the ``convex'' sum $rx + (1-r)y$ is again in $X$. Informally
this says that a whole line segment is contained in $X$ as soon as the
endpoints are in $X$.  Convexity is of course a well-established
notion that finds applications in for instance geometry, probability
theory, optimisation, economics and quantum mechanics (with mixed
states as convex combinations of pure states). The definition of
convexity (as just given) assumes a monoidal structure $+$ on the set
$X$ and also a scalar multiplication $\unitR\times X\rightarrow
X$. People have tried to capture this notion of convexity with fewer
assumptions, see for instance~\cite{NeumannM44}, \cite{Stone49}
or~\cite{Gudder79}. We shall use the latter source that involves a
ternary operation $\tuple{-,-,-} \colon \unitR\times X\times
X\rightarrow X$ satisfying a couple of equations, see
Definition~\ref{ConvexSetDef}. We first recall (see
\textit{e.g.}~\cite{Swirszcz74,Flood81,Keimel08,Doberkat06}) that such
convex structures can equivalently be described uniformly as algebras
of a monad, namely of the distribution monad $\Dstr$, see
Theorem~\ref{ConvexSetAlgThm}. Such an algebra map gives an
interpretation of each convex combination $r_{1}x_{1} + \cdots +
r_{n}x_{n}$, where $r_{1}+\cdots + r_{n}=1$, as a single element of
$X$. This algebraic description of convexity allows us to generalise
it from scalars $\unitR$ (or actually $\mathbb{R}_{\geq0}$) to
arbitrary semirings (or semifields) $S$ as scalars, and yields an
abstract description of a familiar embedding construction as an
adjunction between $S$-convex sets and $S$-semimodules, see
Proposition~\ref{DstrMltAlgAdjProp} below.

The main topic of this paper is duality for convex spaces. We shall
describe two dual adjunctions:
\begin{equation}
\label{DualAdjsDiag}
\xymatrix{
\Cat{PreFrm}\ar@/^2ex/ [rr]^{\Hom(-,\{0,1\})} & \bot & 
\Cat{Conv}\op \ar@/^2ex/ [ll]^{\Hom(-,\{0,1\})}
   \ar@/_2ex/[rr]_{\Hom(-,\unitR)} & \bot & 
   \Cat{EA}\ar@/_2ex/[ll]_{\Hom(-,\unitR)}
}
\end{equation}

\noindent namely in Theorems~\ref{ConvPreFrmDualAdjThm}
and~\ref{ConvEffAlgDualAdjThm}. This diagram involves the following
structures.
\begin{itemize}
\item The category \Cat{Conv} of (real) convex sets, with as special
  objects the unit interval $\unitR$ and the two element set
  $\{0,1\}$.  This unit interval captures probabilities, and $\{0,1\}$
  the Boolean truth values.

\item The category \Cat{PreFrm} of preframes: posets with directed
  joins and finite meets, distributing over these joins,
  see~\cite{JohnstoneV91}. These preframes are slightly more general
  than frames (or complete Heyting algebras) that occur in the
  familiar duality with topological spaces, see~\cite{Johnstone82}.

\item The category \Cat{EA} of effect algebras (from~\cite{FoulisB94},
  see also~\cite{DvurecenskijP00} for an overview). Effect algebras
  have arisen in the foundations of quantum mechanics and are used to
  capture quantum effects, as studied in quantum statistics and
  quantum measurement theory, see \textit{e.g.}~\cite{BuschGL95}.
\end{itemize}

\noindent The diagram~\eqref{DualAdjsDiag} thus suggests that convex
sets form a setting in which one can study both Boolean and
probabilistic logics. It opens up new questions, like: can the
adjunctions be refined further so that one actually obtains
equivalences, like between Stone spaces and Boolean algebras
(see~\cite{Johnstone82} for an overview). This is left to future
work. Dualities are important in algebra, topology and logic, for
transferring results and techniques from one domain to another.  They
are used in the semantics of computation (see
\textit{e.g.}~\cite{Abramsky91,Vickers89}), but are relatively new in
a quantum setting. They may become part of what is called
in~\cite{BaezS09} an ``extensive network of interlocking analogies
between physics, topology, logic and computer science''.

The paper starts with a preliminary section that recalls basic
definitions and facts about monads and their algebras. It leads to an
adjunction in Proposition~\ref{DstrMltAlgAdjProp} between two
categories of algebras, namely of the multiset monad and the
distribution monad. Section~\ref{ConvexSetSec} recalls in
Theorem~\ref{ConvexSetAlgThm} how (real) convex sets can be described
as algebras of the distribution monad, giving us the freedom to
generalise convexity to arbitrary semirings (as
scalars). Subsequently, Section~\ref{FilterSec} describes the
adjunction on the left in~\eqref{DualAdjsDiag} between convex sets and
preframes, via prime filters in convex sets and Scott-open filters in
preframes.  Both can be described via homomorphisms to the dualising
object $\{0,1\}$.  The adjunction on the right in~\eqref{DualAdjsDiag}
requires that we first sketch the basics of effect algebras. This is
done in Section~\ref{EffAlgSec}. The unit interval $\unitR$ now serves
as dualising object, where we note that effect algebra maps
$E\rightarrow\unitR$ are commonly studied as states in a quantum
system. The paper concludes in Section~\ref{HilbertSec} with a few
remarks about Hilbert spaces in relation to the dual
adjunctions~\eqref{DualAdjsDiag}.

\section{Monads and their algebras}\label{MonadSec}

A monad is a key concept in the generic categorical description of
algebraic structures. It is defined as a functor $T\colon \cat{C}
\rightarrow \cat{C}$ from a category \cat{C} to itself together with
two natural transformations, called the ``unit'' and
``multiplication'', see below. In the present context we don't need
the full generality and shall thus restrict ourselves to the case
where \cat{C} is the category \Sets of sets and functions. Associated
with a monad there is a category $\Alg(T)$ of algebras. Many
mathematical structures of interest arise in this uniform
manner. Categories of algebras $\Alg(T)$ satisfy certain useful
properties by default, see Theorem~\ref{MonadAlgStructThm}. This
section will review some standard definitions and results on monads
and their algebras that will be usefull in the sequel. More
information may be found in for
instance~\cite{MacLane71,BarrW85,Manes74}.

\begin{definition}
\label{MonadDef}
A monad (on \Sets) consists of an endofunctor
$T\colon\Sets\rightarrow\Sets$ together with two natural
transformations: a unit $\eta\colon \idmap \Rightarrow T$ and
multiplication $\mu \colon T^{2} \Rightarrow T$. These are required to
make the following diagrams commute, for $X\in\Sets$.
$$\xymatrix@C-1pc@R-.5pc{
T(X)\ar[rr]^-{\eta_{T(X)}}\ar@{=}[drr] & & T^{2}(X)\ar[d]^{\mu_X} & &
   T(X)\ar[ll]_{T(\eta_{X})}\ar@{=}[dll]
&  & T^{3}(X)\ar[rr]^-{\mu_{T(X)}}\ar[d]_{T(\mu_{X})} 
   & & T^{2}(X)\ar[d]^{\mu_X} \\
& & T(X) & &
& &
T^{2}\ar[rr]_{\mu_X} & & T(X)
}$$
\end{definition}

We mention a few instances of this definition, of which the last one
(the distribution monad $\Dstr$) will be most important.

\begin{example}
\label{MonadEx}
{\em(1)}~Let $(M,+,0)$ be a monoid. It can be used to construct a monad
$\widehat{M} \colon \Sets\rightarrow\Sets$ given by $\widehat{M}(X) =
M\times X$. The unit $\eta\colon X\rightarrow M\times X$ is $\eta(x) =
(0,x)$ and the multiplication $\mu \colon \widehat{M}^{2}(X) = M\times
(M\times X) \rightarrow M\times X = \widehat{M}(X)$ is given by
$\mu(a,(b,x)) = (a+b,x)$.

{\em(2)}~The powerset operation $\powerset$ forms a functor $\powerset
\colon \Sets \rightarrow \Sets$, which on a function $f\colon
X\rightarrow Y$ yields $\powerset(f) \colon \powerset(X)\rightarrow
\powerset(Y)$ by direct image: $\powerset(f)(U\subseteq X) =
\set{f(x)}{x\in U}$. Powerset is also a monad: the unit $\eta\colon
X\rightarrow \powerset(X)$ is given by singleton $\eta(x) = \{x\}$ and
multiplication $\mu\colon \powerset^{2}(X)\rightarrow \powerset(X)$ by
union $\mu(V\subseteq \powerset(X)) = \bigcup V$.

{\em(3)}~Let $S$ be a semiring, consisting of an additive monoid
$(S,+,0)$ and a multiplicative monoid $(S,\cdot,1)$, where
multiplication distributes over addition. One can define a
``multiset'' functor $\Mlt_{S}\colon\Sets\rightarrow\Sets$ by:
$$\begin{array}{rcl}
\Mlt_{S}(X)
& = &
\set{\varphi\colon X\rightarrow S}{\support(\varphi)\mbox{ is finite}},
\end{array}$$

\noindent where $\support(\varphi) = \setin{x}{X}{\varphi(x) \neq 0}$
is the support of $\varphi$. For a function $f\colon X\rightarrow Y$
one defines $\Mlt_{S}(f) \colon \Mlt_{S}(X) \rightarrow \Mlt_{S}(Y)$ by:
$$\begin{array}{rcl}
\Mlt_{S}(f)(\varphi)(y)
& = &
\sum_{x\in f^{-1}(y)}\varphi(x).
\end{array}$$

\noindent Such a multiset $\varphi\in \Mlt_{s}(X)$ may be written as
formal sum $s_{1}x_{1}+\cdots+s_{k}x_{k}$ where $\support(\varphi) =
\{x_{1}, \ldots, x_{k}\}$ and $s_{i} = \varphi(x_{i})\in S$ describes
the ``multiplicity'' of the element $x_{i}$. This formal sum notation
might suggest an order $1,2,\ldots k$ among the summands, but this is
misleading. The sum is considered, up-to-permutation of the summands.
Also, the same element $x\in X$ may be counted multiple times, but
$s_{1}x + s_{2}x$ is considered to be the same as $(s_{1}+s_{2})x$
within such expressions. With this formal sum notation one can write
the application of $\Mlt_{S}$ on a map $f$ as
$\Mlt_{S}(f)(\sum_{i}s_{i}x_{i}) =
\sum_{i}s_{i}f(x_{i})$. Functoriality is then obvious.

This multiset functor is a monad, with unit $\eta\colon X\rightarrow
\Mlt_{S}(X)$ is $\eta(x) = 1x$, and multiplication $\mu\colon
\Mlt_{S}(\Mlt_{S}(X)) \rightarrow \Mlt_{S}(X)$ given by
$\mu(\sum_{i}s_{i}\varphi_{i}) =
\lam{x}{\sum_{i}s_{i}\cdot\varphi_{i}(x)}$, where the ``lambda''
notation $\lam{x}{\cdots}$ is used for the function $x\mapsto \cdots$.

For the semiring $S=\NNO$ of natural numbers one gets the free
commutative monoid $\Mlt_{\NNO}(X)$ on a set $X$. And if
$S=\mathbb{Z}$ one obtains the free Abelian group
$\Mlt_{\mathbb{Z}}(X)$ on $X$. The Boolean semiring $2 = \{0,1\}$
yields the finite powerset monad $\powersetfin = \Mlt_{2}$.

{\em(4)}~Analogously to the previous example one defines the distribution
monad $\Dstr_{S}$ for a semiring $S$ by:
$$\begin{array}{rcl}
\Dstr_{S}(X)
& = &
\set{\varphi\colon X\rightarrow S}{\support(\varphi)
   \mbox{ is finite and }\sum_{x\in X}\varphi(x) = 1},
\end{array}$$

\noindent Elements of $\Dstr_{S}(X)$ are convex combinations
$s_{1}x_{1}+\cdots+s_{k}x_{k}$ where $\sum_{i}s_{i} = 1$. Unit and
multiplication can be defined as before. This multiplication is
well-defined since:
$$\textstyle
\sum_{x}\mu(\sum_{i}s_{i}\varphi_{i})(x)
=
\sum_{x}\sum_{i}s_{i}\cdot \varphi_{i}(x)
=
\sum_{i}s_{i}\cdot \big(\sum_{x}\varphi_{i}(x)\big)
=
\sum_{i}s_{i}
=
1.$$

\noindent For the semiring $\nonnegR$ of non-negative real numbers one
obtains the familiar distribution monad $\Dstr_{\nonnegR}$ with
elements $\sum_{i}r_{i}x_{i}$ containing probabilities
$r_{i}\in\unitR$ summing up to 1.  Whenever we write $\Dstr$ without
semiring subscript we refer to this $\Dstr_{\nonnegR}$.  For the
two-element semiring $2 = \{0,1\}$---with join $\disjun$ as sum and
meet $\conjun$ as multiplication---the monad $\Dstr_{2}$ is the
non-empty finite powerset monad $\powersetnefin$.
\end{example}

The inclusion maps $\Dstr_{S}(X) \hookrightarrow \Mlt_{S}(X)$
are natural and commute with the units and multiplications of the
two monads, and thus form an example of a ``map of monads''.

\begin{definition}
\label{MonadAlgDef}
Given a monad $T = (T, \eta, \mu)$ as in the previous definition, one
defines an algebra of this monad as a map $\alpha\colon
T(X)\rightarrow X$ satisfying two requirements, expressed
via the diagrams:
$$\xymatrix@C-1pc@R-.5pc{
X\ar[rr]^-{\eta_X}\ar@{=}[drr] & & T(X)\ar[d]^{\alpha}
& &
T^{2}(X)\ar[rr]^-{\mu_X}\ar[d]_{T(\alpha)} & & T(X)\ar[d]^{\alpha} \\
& & X
& &
T(X)\ar[rr]_-{\alpha} & & X
}$$

\noindent We shall write $\Alg(T)$ for the category with such algebras
as objects. A morphism $\smash{\big(T(X)\stackrel{\alpha}{\rightarrow}
  X\big) \stackrel{f}{\longrightarrow}
  \big(T(Y)\stackrel{\beta}{\rightarrow} Y\big)}$ in $\Alg(T)$ is
a map $f\colon X\rightarrow Y$ between the underlying sets satisfying
$f \after \alpha = \beta \after T(f)$.
\end{definition}

There is an obvious forgetful functor $U\colon \Alg(T) \rightarrow
\Sets$ that maps an algebra to its underlying set:
$\smash{U\big(T(X)\stackrel{\alpha}{\rightarrow} X\big) = X}$. It has a left
adjoint mapping a set $Y$ to the multiplication $\mu_Y$, as algebra
$T(T(Y)) \rightarrow T(Y)$ on $T(Y)$.

We shall briefly review what algebras are of the monads (1)--(3) in
Example~\ref{MonadEx}. Elaborating all details requires some amount of
work. The algebras of the fourth (distribution) monad will be
characterised in the next section.

\begin{example}
\label{MonadAlgEx}
{\em(1)}~The category of algebras of the monad $\widehat{M} =
M\times(-)$ for a monoid $M$ is precisely the category of $M$-actions
and their morphisms. Such an action consists of a scalar
multiplication map $\bullet\colon M\times X\rightarrow X$ satisfying
two equations, $0\mathrel{\bullet} x = x$ and $(a+b)\mathrel{\bullet} x
= a\mathrel{\bullet} (b\mathrel{\bullet} x)$, corresponding to the
two diagrams in Definition~\ref{MonadAlgDef}.

{\em(2)}~Algebras $\alpha\colon \powerset(X)\rightarrow X$ for the
powerset monad $\powerset$ correspond to a join operation of a
complete lattice. Such $\alpha$ yields an partial order $x\leq y
\Leftrightarrow \alpha(\{x,y\}) = y$, with $\alpha(U)$ as least
upperbound of the elements in $U$. Algebra homomorphisms correspond to
``linear'' functions that preserve all joins.

{\em(3)}~An algebra $\alpha\colon\Mlt_{S}(X)\rightarrow X$ for the
multiset monad corresponds to a monoid structure on $X$---given by
$x+y = \alpha(1x + 1y)$---together with a scalar multiplication
$\bullet \colon S\times X\rightarrow X$ given by $s\mathrel{\bullet} x
= \alpha(sx)$. It preserves the additive structure (of $S$ and of $X$)
in each coordinate separately. This makes $X$ a semimodule, for the
semiring $S$. Conversely, such an $S$-semimodule structure on a
commutative monoid $M$ yields an algebra $\Mlt_{S}(M)\rightarrow M$ by
$\sum_{i}s_{i}x_{i} \mapsto \sum_{i}s_{i}\mathrel{\bullet}x_{i}$.
Thus the category of algebras $\Alg(\Mlt_{S})$ is equivalent to the
category $\Cat{SMod}_{S}$ of $S$-semimodules.
\end{example}

We continue this section with two basic results, which are stated
without proof, but with a few subsequent pointers.

\begin{theorem}
\label{MonadAlgStructThm}
For a monad $T$ on $\Sets$, the category $\Alg(T)$ of algebras is:
\begin{enumerate}
\item both complete and cocomplete, so has all limits and colimits;

\item symmetric monoidal/tensorial closed in case the monad $T$ is
  ``commutative''. \QED
\end{enumerate}
\end{theorem}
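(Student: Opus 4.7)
For part (1), completeness follows by showing that the forgetful functor $U\colon \Alg(T)\rightarrow \Sets$ creates limits. Given a diagram $D\colon J\rightarrow \Alg(T)$ with $D(j) = (X_j,\alpha_j)$, first form the limit $(L,(\pi_j)_j)$ of $U\after D$ in \Sets. Because each arrow of $D$ is a $T$-algebra morphism, the composites $\alpha_j\after T(\pi_j)\colon T(L)\rightarrow X_j$ form a cone, so the universal property induces a unique $\alpha\colon T(L)\rightarrow L$ with $\pi_j\after \alpha = \alpha_j \after T(\pi_j)$ for all $j$. The two algebra laws for $\alpha$ then follow from uniqueness after post-composing with each $\pi_j$ and invoking the algebra laws for $\alpha_j$, so $(L,\alpha)$ is the limit in $\Alg(T)$.

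Cocompleteness is subtler. The plan is to establish reflexive coequalizers and arbitrary coproducts, from which all small colimits follow by a standard argument. For coequalizers, one exploits the canonical presentation of every algebra $(X,\alpha)$ as a reflexive coequalizer
$$\xymatrix@1{(T^{2}(X),\mu_{T(X)}) \ar@<.5ex>[r]^-{T(\alpha)} \ar@<-.5ex>[r]_-{\mu_X} & (T(X),\mu_X)}$$
of free algebras, together with the fact that $T$ transports split (and, over \Sets, suitable reflexive) coequalizers. Coproducts can be built by a generators-and-relations construction when $T$ is finitary, and for general $T$ I would appeal to Linton's theorem, which yields cocompleteness of $\Alg(T)$ for every monad on \Sets.

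For part (2), commutativity of $T$ provides a well-defined double strength $\dst\colon T(X)\times T(Y)\rightarrow T(X\times Y)$, obtained as the common value of the two iterated strengths. For algebras $A = (X,\alpha)$ and $B = (Y,\beta)$, I would define $A\otimes B$ as the coequalizer in $\Alg(T)$ of a parallel pair of $T$-algebra maps whose role is to identify those formal $T$-sums in $T(X\times Y)$ that differ by $\alpha$- or $\beta$-evaluation in either coordinate; equivalently, $A\otimes B$ classifies $T$-bilinear maps out of $X\times Y$. The internal hom $[A,B]$ is taken to be the set $\Alg(T)(A,B)$ equipped with the pointwise $T$-algebra structure inherited from $\beta$ via $\dst$; commutativity is exactly what is needed to see that the pointwise operation preserves the property of being a homomorphism. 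The adjunction $A\otimes -\dashv [A,-]$ then follows from natural bijections $\Alg(T)(A\otimes B,C)\cong\{T\text{-bilinear maps }X\times Y\rightarrow C\}\cong \Alg(T)(B,[A,C])$, and symmetry and associativity of $\otimes$ are inherited from those of $\dst$.

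The main obstacle will be twofold. First, constructing coproducts in $\Alg(T)$ for an arbitrary, possibly non-finitary monad on \Sets needs a transfinite iteration or an appeal to the general accessibility/adjoint-functor machinery, and is not elementary; I would lean on the classical Linton result rather than reprove it. Second, checking that the coequalizer defining $A\otimes B$ really classifies bilinear maps and furnishes a left adjoint to $[A,-]$ requires patient bookkeeping with $\dst$, the algebra maps $\alpha,\beta$, and the monad laws — a dozen or so commuting diagrams going back to Kock. Both arguments are standard, and I would cite rather than reproduce them in full.
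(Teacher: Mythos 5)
Your plan is correct and follows exactly the route the paper itself indicates: the paper states this theorem without proof and points to the standard references — creation of limits by the forgetful functor, Linton's result (via Barr--Wells) for cocompleteness over \Sets, and Kock's construction of the tensor classifying bilinear maps for the monoidal closed structure of algebras of a commutative monad. Nothing in your sketch deviates from or omits anything the paper relies on.
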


A category of algebras is always ``as complete'' as its underlying
category, see \textit{e.g.}~\cite{Manes74,BarrW85}. Since \Sets is
complete, so is $\Alg(T)$.  Cocompleteness is special for algebras
over \Sets and follows from a result of Linton's, see~\cite[\S~9.3,
  Prop.~4]{BarrW85}.

Monoidal structure in categories of algebras goes back
to~\cite{Kock71a,Kock71b}. Each monad on \Sets is strong, via a
``strength'' map $\st\colon X\times T(Y) \rightarrow T(X\times Y)$
given as $\st(x,v) = T(\lam{y}{\tuple{x, y}})(v)$. There is
also a swapped version $\st'\colon T(X)\times Y \rightarrow T(X\times
Y)$ given by $\st'(u, y) = T(\lam{x}{\tuple{x,y}})(u)$. There are now
in principle two maps $T(X)\times T(Y) \rightrightarrows T(X\times
Y)$, namely:
$$\xymatrix@R-2pc{
& T(T(X)\times Y)\ar[r]^-{T(\st')} &
   T^{2}(X\times Y) \ar[dr]^-{\mu} & \\
T(X)\times T(Y)\ar[ur]^-{\st}\ar[dr]_-{\st'}  & & & T(X\times Y) \\
& T(X\times T(Y))\ar[r]_-{T(\st)} &
   T^{2}(X\times Y) \ar[ur]_-{\mu} & 
}$$

\noindent The monad $T$ is called commutative if these two composites
$T(X)\times T(Y) \rightrightarrows T(X\times Y)$ are the same. 

The monad $\widehat{M} = M\times(-)$ in Example~\ref{MonadEx} is
commutative if and only if $M$ is a commutative monoid. The other
three examples $\powerset, \Mlt_{S}$ and $\Dstr$ are commutative.

We proceed with some elementary observations about the functoriality
in the semiring $S$ of the monad constructions $\Mlt_S$ and $\Dstr_S$
from Example~\ref{MonadEx}.

\begin{lemma}
\label{SemiRingHomLem}
Let $h\colon S\rightarrow S'$ be a homomorphism of semirings (preserving
both $0,+$ and $1,\cdot$). It yields:
\begin{enumerate}
\item homomorphisms of monads $\Mlt_{S} \rightarrow \Mlt_{S'}$ and
  $\Dstr_{S}\rightarrow \Dstr_{S'}$ by post-composi\-tion:
  $\varphi\mapsto f\after \varphi$, or equivalently,
  $\sum_{i}s_{i}x_{i} \mapsto \sum_{i}h(s_{i})x_{i}$;

\item functors, in the opposite direction, between the associated
  categories of algebras $\Alg(\Mlt_{S'})\rightarrow \Alg(\Mlt_{S})$
  and $\Alg(\Dstr_{S'})\rightarrow \Alg(\Dstr_{S})$, via
  pre-composi\-tion with the monad map from the previous point. \QED
\end{enumerate}
\end{lemma}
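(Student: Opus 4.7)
The plan is to treat the two claims in sequence, with the bulk of the work concentrated in part (1), since part (2) is then an instance of the general categorical fact that pre-composition with a monad map transports algebras contravariantly.

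For part (1), I would first define, for each set $X$, the candidate component
$\sigma_{X}\colon \Mlt_{S}(X)\rightarrow \Mlt_{S'}(X)$ by $\sigma_{X}(\varphi) = h\after\varphi$, equivalently $\sigma_X\bigl(\sum_{i}s_{i}x_{i}\bigr) = \sum_{i}h(s_{i})x_{i}$. Well-definedness is immediate: since $h(0)=0$, the support of $h\after\varphi$ is contained in that of $\varphi$, hence finite. Naturality $\sigma_{Y}\after \Mlt_{S}(f) = \Mlt_{S'}(f)\after \sigma_{X}$ follows from $h$ preserving finite sums, because $h\bigl(\sum_{x\in f^{-1}(y)}\varphi(x)\bigr) = \sum_{x\in f^{-1}(y)}h(\varphi(x))$. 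To verify that $\sigma$ is a map of monads, I need compatibility with $\eta$ and $\mu$: the unit identity $\sigma_{X}\after\eta^{S}_{X} = \eta^{S'}_{X}$ uses $h(1)=1$; the multiplication identity $\sigma_{X}\after\mu^{S}_{X} = \mu^{S'}_{X}\after\sigma_{\Mlt_{S'}(X)}\after\Mlt_{S}(\sigma_{X})$ unfolds, at a formal sum $\sum_{i}s_{i}\varphi_{i}$, to the pointwise identity $h\bigl(\sum_{i}s_{i}\cdot\varphi_{i}(x)\bigr) = \sum_{i}h(s_{i})\cdot h(\varphi_{i}(x))$, which is exactly preservation of $+$ and $\cdot$ by $h$. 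The same formulas and verifications apply verbatim to $\Dstr_{S}$, with one extra check: the sum-to-one condition is preserved, since $\sum_{x}h(\varphi(x)) = h\bigl(\sum_{x}\varphi(x)\bigr) = h(1) = 1$ using additivity and $h(1)=1$.

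For part (2), given a monad map $\sigma\colon T \Rightarrow T'$ (here $T = \Mlt_{S}$, $T'=\Mlt_{S'}$, or the distribution analogues), and an algebra $\alpha\colon T'(X)\rightarrow X$, I would set $\alpha^{\sharp} = \alpha\after\sigma_{X}\colon T(X)\rightarrow X$. The unit law $\alpha^{\sharp}\after\eta^{T}_{X} = \alpha\after\sigma_{X}\after\eta^{T}_{X} = \alpha\after\eta^{T'}_{X} = \idmap[X]$ follows from the monad-map compatibility with units; the associativity law follows by pasting the naturality square of $\sigma$, the multiplication compatibility of $\sigma$, and the algebra law of $\alpha$. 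On morphisms, an algebra map $f\colon (X,\alpha)\rightarrow (Y,\beta)$ remains a map between the induced algebras because naturality of $\sigma$ gives $f\after\alpha\after\sigma_{X} = \beta\after T'(f)\after\sigma_{X} = \beta\after\sigma_{Y}\after T(f)$. Functoriality (identity and composition) is immediate from the pointwise definition.

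The main obstacle, as far as there is one, will be the multiplication-compatibility diagram for the monad map in part (1): one has to be careful to identify the element $\sigma_{X}\after\mu^{S}_{X}(\sum_{i}s_{i}\varphi_{i})$ as the function $x\mapsto h\bigl(\sum_{i}s_{i}\varphi_{i}(x)\bigr)$ and the other side as $x\mapsto \sum_{i}h(s_{i})h(\varphi_{i}(x))$, so that the equality reduces transparently to $h$ being a semiring homomorphism. Everything else is bookkeeping and is essentially forced by the general principle that any morphism of monads induces a functor between their Eilenberg--Moore categories in the opposite direction.
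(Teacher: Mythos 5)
Your proof is correct. The paper states this lemma without proof (the \QED appears directly after the statement, treating it as routine), and your argument is exactly the standard verification it implicitly relies on: post-composition with $h$ respects supports, units, and multiplications because $h$ preserves $0,+,1,\cdot$, and the induced functor on algebras is the general Eilenberg--Moore transport along a monad morphism.
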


\begin{definition}
\label{SemiringVersionsDef}
A semiring $S$ is called zerosumfree if $x+y=0$ implies both $x=0$ and
$y=0$.  It is integral if it has no zero divisors: $x\cdot y = 0$
implies either $x=0$ or $y=0$. And it is called a semifield if it is
non-trivial (\textit{i.e.}~$0\neq 1$), zerosumfree, integral and each
non-zero element $s\in S$ has a multiplicative inverse $s^{-1} =
\frac{1}{s}\in S$.
\end{definition}

The semiring $\mathbb{N}$ of natural numbers is non-trivial, zerosumfree
and integral. The semirings $\mathbb{Q}_{\geq 0}$ and
$\nonnegR$ of nonnegative rational and real numbers are
examples of semifields. As is well-known, the quotient ring
$\mathbb{Z}_{n} = \mathbb{Z}/n\mathbb{Z}$ of integers modulo $n$ is
integral if $n$ is prime.

For a non-trivial, zerosumfree, integral
semiring $S$ there is a homomorphism of semirings $h\colon
S\rightarrow 2$ given by $h(x) = 0$ iff $x=0$. For such a semiring
Lemma~\ref{SemiRingHomLem} yields functors:
\begin{equation}
\label{JoinLatHomEqn}
\xymatrix@R-2pc{
\Cat{FJL} = \Alg(\powersetfin) = \Alg(\Mlt_{2})\ar[r] & \Alg(\Mlt_{S}) \\
\Cat{BJL} = \Alg(\powersetnefin) = \Alg(\Dstr_{2})\ar[r] & \Alg(\Dstr_{S})
}
\end{equation}

\noindent where \Cat{FJL} is the category of finite join lattices,
with finite joins $(0,\vee)$, and \Cat{BJL} the category of binary
join lattices, with join $\vee$ only (and thus joins over all
non-empty finite subsets). This construction uses for a lattice $L$
the scalar multiplication: 
$$\xymatrix{
S\times L\ar[r] & L \quad\mbox{given by}\quad (s,x)\ar@{|->}[r] &
   {\left\{\begin{array}{ll} 0 & \mbox{if }s=0 \\ x & \mbox{otherwise.}
   \end{array}\right.}
}$$

\noindent and thus the interpretation $s_{1}x_{1}+\cdots+s_{n}x_{n}
\longmapsto x_{1}\disjun \cdots \disjun x_{n}$, assuming $s_{i}\neq 0$
for each $i$. For the distribution monad $\Dstr$ this involves a
non-empty join, since the $s_i$ must add up to 1. We can do the same
for a meet semilattice $(K,\wedge,1)$ since $K\op$ with order reserved
is a join semilattice. The scalar multiplication becomes $(0,x)\mapsto
1$ and $(s,x)\mapsto x$ if $s\neq 0$, so that the induced semimodule
structure is, assuming $s_{i}\neq 0$,
\begin{equation}
\label{MSLsemimoduleEqn}
\begin{array}{rcl}
s_{1}x_{1}+\cdots+s_{n}x_{n} 
& \longmapsto &
x_{1}\conjun \cdots \conjun x_{n}.
\end{array}
\end{equation}

\auxproof{
Clearly, $h\colon S\rightarrow 2$ given by $h(x) = 0$ iff $x=0$
satisfies $h(0) = 0$ and $h(1) = 1$, and:
$$\begin{array}{rcl}
h(x+y) = 0
& \Leftrightarrow &
x+y = 0 \\
& \Leftrightarrow &
x=0 \mbox{ and } y=0 \\
& \Leftrightarrow &
h(x)=0 \mbox{ and } h(y) = 0 \\
& \Leftrightarrow &
h(x) \disjun h(y) = 0 \\
h(x\cdot y) = 0
& \Leftrightarrow &
x\cdot y = 0 \\
& \Leftrightarrow &
x = 0 \mbox{ or } y = 0 \\
& \Leftrightarrow &
h(x) = 0 \mbox{ or } h(y) = 0 \\
& \Leftrightarrow &
h(x) \conjun h(y) = 0.
\end{array}$$

\noindent Additionally we check that the scalar multiplication
$\scalar \colon S\times L\rightarrow L$ uses the required properties
of $S$. Non-triviality $0\neq 1$ ensures that $1\scalar x = x$.
Clearly we have $s\scalar 0 = 0$.  For the composition requirement
$s\scalar (t\scalar x) = (s\cdot t)\scalar x$ consider the following
three cases.
\begin{itemize}
\item if $t=0$, then $s\scalar (t\scalar x) = s\scalar 0 = 0 = 0\scalar 0
= (s\cdot t)\scalar x$;

\item if $t\neq 0$ but $s=0$ we get $s\scalar (t\scalar x) = 0\scalar
  x = 0 = 0\scalar 0 = (s\cdot t)\scalar x$;

\item if $t\neq 0$ and $t\neq 0$ then $s\cdot t\neq 0$ because $S$ is
  integral, so that $s\scalar (t\scalar x) = s\scalar x = x = (s\cdot
  t)\scalar x$.
\end{itemize}

\noindent Similarly we check $(s+t)\scalar x = s\scalar x \disjun
t\scalar x$.
\begin{itemize}
\item if $t=0$, then $(s+t)\scalar x = s\scalar x = s\scalar x \disjun 0
  = s\scalar x \disjun t\scalar x$

\item if $t\neq 0$ but $s=0$ we get $(s+t)\scalar x = t\scalar x = 0 \disjun
  t\scalar x = s\scalar x \disjun t\scalar x$

\item if $t\neq 0$ and $t\neq 0$ then $s+t\neq 0$ by zerosumfreeness
  and so $(s+t)\scalar x = x = x\disjun x = s\scalar x \disjun
  t\scalar x$.
\end{itemize}

\noindent Finally we check $s\scalar (x\disjun y) = s\scalar x
\disjun s\scalar y$:
\begin{itemize}
\item if $s=0$, then $s\scalar (x\disjun y) = 0 = 0\disjun 0 =
  s\scalar x \disjun s\scalar y$ 

\item if $s\neq 0$, then $s\scalar (x\disjun y) = x\disjun y = s\scalar x
\disjun s\scalar y$.
\end{itemize}
}

The next construction goes back to~\cite{Stone49} and occurs in many
places (see \textit{e.g.}~\cite{PulmannovaG98,Keimel08}) but is
usually not formulated in the following way. It can be understood as a
representation theorem turning a convex set into a semimodule.

\begin{proposition}
\label{DstrMltAlgAdjProp}
Let $S$ be a semifield. The functor $U\colon \Alg(\Mlt_{S})
\rightarrow \Alg(\Dstr_{S})$ induced by the map of monads $\Dstr_{S}
\Rightarrow \Mlt_{S}$ has a left adjoint.
\end{proposition}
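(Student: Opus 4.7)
The plan is to construct the left adjoint $F\colon \Alg(\Dstr_S) \to \Alg(\Mlt_S)$ explicitly as a ``cone'' on a convex set. Given a $\Dstr_S$-algebra $\alpha\colon \Dstr_S(X) \to X$, set
$$F(X) \;=\; \{(s,x) \mid s\in S,\; s\neq 0,\; x\in X\} \,\cup\, \{\nul\},$$
with scalar multiplication $r\scalar \nul = \nul$, $0\scalar(s,x) = \nul$, and $r\scalar(s,x) = (rs,x)$ for $r\neq 0$, and with addition $\nul + a = a + \nul = a$ together with
$$(s,x) + (t,y) \;=\; \bigl(s+t,\; \alpha\bigl(\tfrac{s}{s+t}\,x + \tfrac{t}{s+t}\,y\bigr)\bigr).$$
Because $S$ is a semifield the fractions $\tfrac{s}{s+t}$ exist, and because $S$ is zerosumfree we have $s+t\neq 0$ whenever $s,t\neq 0$; the argument of $\alpha$ is a bona fide element of $\Dstr_S(X)$ since its coefficients sum to $1$.

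The heart of the proof is to verify that $F(X)$ is an $S$-semimodule, equivalently a $\Mlt_S$-algebra by Example~\ref{MonadAlgEx}(3). Associativity of $+$ is the crucial point: unrolling $((s,x)+(t,y))+(u,z)$ yields a scalar $s+t+u$ and an argument of $\alpha$ of the form $\tfrac{s+t}{s+t+u}\,\alpha(\tfrac{s}{s+t}x+\tfrac{t}{s+t}y) + \tfrac{u}{s+t+u}z$, while $(s,x)+((t,y)+(u,z))$ yields the symmetric nested expression; by the multiplication axiom of the algebra $\alpha$, applied to the obvious element of $\Dstr_S(\Dstr_S(X))$, both have the same $\alpha$-value, namely the flat convex combination $\alpha(\tfrac{s}{s+t+u}x+\tfrac{t}{s+t+u}y+\tfrac{u}{s+t+u}z)$. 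The distributive laws $(r+s)\scalar a = r\scalar a + s\scalar a$ and $r\scalar(a+b) = r\scalar a + r\scalar b$ reduce after scalar arithmetic to the unit axiom $\alpha\after\eta_X = \mathrm{id}$ and to a direct computation with the fractions; the remaining semimodule equations are immediate.

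Next, the map $\eta_X\colon X \to U(F(X))$ sending $x\mapsto (1,x)$ is a morphism of $\Dstr_S$-algebras: that $\sum_i s_i(1,x_i)$, evaluated in $F(X)$, equals $(1,\alpha(\sum_i s_i x_i))$ follows by induction on the size of the support from the definition of $+$ and the multiplication axiom of $\alpha$. For the universal property, given any $S$-semimodule $M$ and convex map $f\colon X\to U(M)$, one defines $\hat f\colon F(X) \to M$ by $\hat f(\nul) = 0$ and $\hat f(s,x) = s\scalar f(x)$; the formulas above and the semimodule laws in $M$ show that $\hat f$ preserves scalar multiplication and addition, and the identity $(s,x) = s\scalar \eta_X(x)$ in $F(X)$ forces $\hat f$ to be the \emph{unique} semimodule homomorphism with $U(\hat f)\after \eta_X = f$. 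This produces the required bijection $\Alg(\Mlt_S)(F(X),M) \cong \Alg(\Dstr_S)(X,U(M))$, naturally in $X$ and $M$.

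The main obstacle is the associativity verification: one must recognise the two nested expressions above as the images under $\alpha\after\mu_X$ and $\alpha\after \Dstr_S(\alpha)$ of a single element of $\Dstr_S(\Dstr_S(X))$, and this is exactly the place where the semifield hypothesis on $S$ is genuinely used, since the renormalisations $\tfrac{s}{s+t}$ must be invertible back and forth. Once that identification is made, everything else is routine bookkeeping.
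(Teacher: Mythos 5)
Your construction is exactly the paper's: the same cone $F(X)=\{0\}+S_{\neq 0}\times X$ with the same addition $(s,x)+(t,y)=(s+t,\alpha(\tfrac{s}{s+t}x+\tfrac{t}{s+t}y))$ and scalar action, the same verification of the semimodule axioms via the multiplication law of $\alpha$, and the same transposes $\hat f(s,x)=s\scalar f(x)$ and $x\mapsto(1,x)$. The only cosmetic difference is that you phrase the adjunction via the universal property of the unit rather than the explicit hom-set bijection; the content is identical and correct.
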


\begin{proof}
  Assume an algebra $\alpha\colon\Dstr_{S}(X)\rightarrow X$ and write
  $S_{\neq 0} = \setin{s}{S}{s\neq 0}$ for the set of non-zero
  elements. We shall turn it into a semimodule $F(X)$, where:
$$\begin{array}{rcl}
F(X)
& = &
\{0\} + S_{\neq0}\times X,
\end{array}$$

\noindent with addition for $u,v\in F(X)$,
$$\begin{array}{rcl}
u+v
& = &
\left\{\begin{array}{ll}
0 & \mbox{if $u=0$ and $v=0$} \\
u & \mbox{if $v=0$} \\
v & \mbox{if $u=0$} \\
(s+t, \alpha(\frac{s}{s+t}x + \frac{t}{s+t}y)) 
  & \mbox{if $u=(s,x)$ and $v=(t,y)$.}
\end{array}\right.
\end{array}$$

\noindent It is well-defined by zerosumfreeness of $S$. By
construction, $0\in F(X)$ is the neutral element for this $+$. A
scalar multiplication $\bullet\colon S\times F(X)\rightarrow F(X)$ is
defined as:
$$\begin{array}{rcl}
s\scalar u
& = &
\left\{\begin{array}{ll}
0 & \mbox{if $u=0$ or $s=0$} \\
(s\cdot t, x) & \mbox{if }u = (t,x)\mbox{ and }s\neq 0.
\end{array}\right.
\end{array}$$

\noindent Well-definedness follows because $S$ is integral. Obviously,
$1\scalar u = u$ (because $S$ is non-trivial) and $r\scalar (s\scalar u)
= (r\cdot s)\scalar u$. This makes $F(X)$ a semimodule over $S$.

\auxproof{
We check that $F(X)$ is a semimodule. Commutativity of $+$ is 
obvious, so we look at associativity. Since $u+v = 0$ iff both
$u=0$ and $v=0$, this is obvious for $u=0$ or $v=0$ or $w=0$.
Hence we consider $u=(r,x)$, $v=(s,y)$, $w=(t,z)$. Associativity
then holds because:
$$\begin{array}{rcl}
u + (v+w)
& = &
\alpha(\frac{r}{r+s+t}x + \frac{s+t}{r+s+t}
   \alpha(\frac{s}{s+t}y + \frac{t}{s+t}z)) \\
& = &
\alpha(\frac{r}{r+s+t}\alpha(1x) + \frac{s+t}{r+s+t}
   \alpha(\frac{s}{s+t}y + \frac{t}{s+t}z)) \\
& = &
\alpha((\frac{r}{r+s+t}\cdot 1)x + (\frac{s+t}{r+s+t}\cdot \frac{s}{s+t})y + 
   (\frac{s+t}{r+s+t}\cdot\frac{t}{s+t})z) \\
& = &
\alpha(\frac{r}{r+s+t}x + \frac{s}{r+s+t}y + \frac{t}{r+s+t}z) \\
& = &
\alpha((\frac{r+s}{r+s+t}\cdot\frac{r}{r+s})y + 
   (\frac{r+s}{r+s+t}\cdot\frac{s}{r+s})z + (\frac{t}{r+s+t}\cdot 1)z) \\
& = &
\alpha(\frac{r+s}{r+s+t}\alpha(\frac{r}{r+s}y + \frac{s}{r+s}z) +
   \frac{t}{r+s+t}\alpha(1z)) \\
& = &
\alpha(\frac{r+s}{r+s+t}\alpha(\frac{r}{r+s}y + \frac{s}{r+s}z) +
   \frac{t}{r+s+t}z) \\
& = &
(u+v)+w
\end{array}$$

\noindent The zero-ary distributivity properties $0\scalar u = 0$ and
$s\scalar 0 = 0$ hold by construction. Further, for $s\neq 0$,
$u=(r,x), v=(t,y)$,
$$\begin{array}{rcl}
s\scalar (u+v)
& = &
s\scalar (r+t, \alpha(\frac{r}{r+t}x + \frac{t}{r+t}y)) \\
& = &
(s\cdot (r+t), \alpha(\frac{r}{r+t}x + \frac{t}{r+t}y)) \\
& = &
(s\cdot r+s\cdot t, \alpha(\frac{s\cdot r}{s\cdot r+s\cdot t}x + 
   \frac{s\cdot t}{s\cdot r+s\cdot t}y)) \\
& = &
(s\cdot r, x) + (s\cdot t, y) \\
& = &
s\scalar u + s\scalar v.
\end{array}$$

\noindent Similarly, if also $r\neq 0$,
$$\begin{array}{rcl}
(r+s)\scalar v
& = &
((r+s)\cdot t, y) \\
& = &
(r\cdot t + s\cdot t, y) \\
& = &
(r\cdot t + s\cdot t, \alpha(1y)) \\
& = &
(r\cdot t + s\cdot t, \alpha(\frac{r\cdot t}{r\cdot t + s\cdot t}y +
   \frac{s\cdot t}{r\cdot t + s\cdot t}y) \\
& = &
(r\cdot t, y) + (s\cdot t, y) \\
& = &
r\scalar v + s\scalar v.
\end{array}$$
}

Next we show that $F$ yields a left adjoint to $U\colon \Alg(\Mlt_{S})
\rightarrow \Alg(\Dstr_{S})$, via the following bijective correspondence.
For a semimodule $Y$,
$$\begin{prooftree}
{\xymatrix{ X\ar[r]^-{f} & U(Y)}}
   \rlap{\qquad in $\Alg(\Dstr_{S})$} 
\Justifies
{\xymatrix{F(X)\ar[r]_-{g} & Y}}
   \rlap{\qquad in $\Alg(\Mlt_{S})$}
\end{prooftree}$$

\noindent It works as follows.
\begin{itemize}
\item Given $f\colon X\rightarrow U(Y)$ in $\Alg(\Dstr_{S})$ define
  $\overline{f}\colon F(X)\rightarrow Y$ by $\overline{f}(0) = 0$ and
  $\overline{f}(r,x) = r\scalar f(x)$ where $\scalar$ is scalar
  multiplication in $Y$. This yields a homomorphism of semimodules,
  \textit{i.e.}~a homomorphism of $\Mlt_S$-algebras.

\item Conversely, given $g\colon F(X)\rightarrow Y$ take
  $\overline{g}\colon X\rightarrow U(Y)$ to be $\overline{g}(x) =
  g(1,x)$. This yields a map of $\Dstr_S$-algebras.
\end{itemize}

Finally we check that we actually have a bijective correspondence:
$$\overline{\overline{f}}(x)
=
\overline{f}(1,x)
=
1\scalar f(x)
=
f(x).$$

\noindent Similarly, $\overline{\overline{g}}(0) = 0$ and:
$$\overline{\overline{g}}(r,x)
=
r\scalar \overline{g}(x) 
=
r\scalar g(1,x)
=
g(r\scalar (1,x))
=
g(r,x).\eqno{\QEDbox}$$

\auxproof{
We first check that $\overline{f}\colon F(X)\rightarrow Y$ is a
map of semimodules. It preserves $0\in F(X)$ by construction, and
for $u=(r,x), v=(t,y)\in F(X)$:
$$\begin{array}{rcl}
\overline{f}(u+v)
& = &
\overline{f}(r+t, \alpha(\frac{r}{r+t}x + \frac{t}{r+t}y)) \\
& = &
(r+t)\scalar f(\alpha(\frac{r}{r+t}x + \frac{t}{r+t}y)) \\
& = &
(r+t)\scalar \beta(\frac{r}{r+t}f(x) + \frac{t}{r+t}f(y)) \\
& & \qquad\mbox{where $\beta$ is the $\Mlt_S$-algebra on $Y$} \\
& = &
(r+t)\scalar (\frac{r}{r+t}\scalar f(x) + \frac{t}{r+t}\scalar f(y)) \\
& = &
\big((r+t)\cdot \frac{r}{r+t}\big)\scalar f(x) + 
   \big((r+t)\cdot \frac{t}{r+t}\big)\scalar f(y)) \\
& = &
r\scalar f(x) + t\scalar f(y) \\
& = &
\overline{f}(u) + \overline{f}(v). \\
\overline{f}(s\scalar u)
& = &
\overline{f}(s\cdot r, x) \\
& = &
(s\cdot r)\scalar f(x) \\
& = &
s\scalar (r\scalar f(x)) \\
& = &
s\scalar \overline{f}(u).
\end{array}$$

\noindent Similarly, $\overline{g}$ is a map of algebras: for
a convex sum $\sum_{i}s_{i}x_{i}$ in $\Dstr_S(X)$,
$$\begin{array}{rcl}
(\overline{g} \after \alpha)(\sum_{i}s_{i}x_{i})
& = &
g(1, \alpha(\sum_{i}s_{i}x_{i})) \\
& = &
g(\sum_{i}s_{i}, \alpha(\sum_{i}\frac{s_{i}}{\sum_{i}s_{i}}x_{i})) \\
& = &
g(\sum_{i}(s_{i},x_{i})) \\
& = &
g(\sum_{i}s_{i}\scalar (1,x_{i})) \\
& = &
\sum_{i}s_{i}\scalar g(1, x_{i}) \\
& = &
\sum_{i}s_{i}\scalar \overline{g}(x_{i}) \\
& = &
\beta(\sum_{i}s_{i}\overline{g}(x_{i})) \\
& = &
(\beta \after \Dstr_{S}(\overline{g}))(\sum_{i}s_{i}x_{i}),
\end{array}$$

\noindent where $\beta$ is the $\Mlt_S$-algebra on $Y$.
}
\end{proof}

\section{Convex Sets}\label{ConvexSetSec}

This section introduces convex structures---or simply, convex
sets---as described in~\cite{Gudder79} and recalls that such
structures can also be described as algebras of the distribution monad
$\Dstr$ from Example~\ref{MonadEx}~(4).

\begin{definition}
\label{ConvexSetDef}
A convex set consists of a set $X$ together with a ternary operation
$\tuple{-,-,-} \colon \unitR \times X \times X \rightarrow X$
satisfying the following four requirements, for all $r\in\unitR$ and
$x,y,z\in X$.
\begin{enumerate}
\item $\tuple{r,x,y} = \tuple{1-r,y,x}$

\item $\tuple{r,x,x} = x$

\item $\tuple{0,x,y} = y$

\item $\tuple{r, x, \tuple{s,y,z}} = 
   \tuple{r+(1-r)s, \; \tuple{\frac{r}{(r+(1-r)s)}, x, y}, \; z}$,
assuming that $(r+(1-r)s) \neq 0$.
\end{enumerate}

A morphism of convex structures $(X, \tuple{-,-,-}_{X}) \rightarrow
(Y, \tuple{-,-,-}_{Y})$ consists of an ``affine'' function $f\colon
X\rightarrow Y$ satisfying $f(\tuple{r,x,x'}_{X}) =
\tuple{r,f(x),f(x')}_{Y}$, for all $r\in\unitR$ and $x,x'\in X$. This
yields a category \Cat{Conv}.
\end{definition}

A convex set is sometimes called a barycentric algebra, using
terminology from~\cite{Stone49}. The tuple $\tuple{r,x,y}$ can also be
written as labeled sum $x \mathrel{+_r} y$, like in~\cite{Keimel08},
but the fourth condition becomes a bit difficult to read with this
notation.



The next result recalls an alternative description of convex
structures and their homomorphisms, namely as algebras of a monad. It
goes back to~\cite{Swirszcz74} and also applies to compact Hausdorff
spaces~\cite{Keimel08} or Polish spaces~\cite{Doberkat06}. For
convenience, a proof sketch is included. Recall that the notation
$\Dstr$ without subscript refers to the distribution monad
$\Dstr_{\nonnegR}$ for the semiring $\nonnegR$ of non-negative real
numbers.

\begin{theorem}
\label{ConvexSetAlgThm}
The category $\Cat{Conv}$ of (real) convex structures is isomorphic to
the category $\Alg(\Dstr)$ of Eilenberg-Moore algebras of the
distribution monad. Hence convex sets are algebraic over sets.
\end{theorem}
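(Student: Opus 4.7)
The plan is to construct functors in both directions between $\Cat{Conv}$ and $\Alg(\Dstr)$ on objects and morphisms, and then check they are mutually inverse. This is a standard monad-presentation theorem, so the bulk of the work is verification; the real content lies in the translation between the binary operation $\tuple{-,-,-}$ and arbitrary finite convex combinations.

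\textbf{From algebras to convex sets.} Given an Eilenberg--Moore algebra $\alpha\colon\Dstr(X)\rightarrow X$, define
\[
\tuple{r,x,y} \;:=\; \alpha\bigl(r\cdot x + (1-r)\cdot y\bigr).
\]
(If $r=1$ this means $\alpha(1x)$, which equals $x$ by the unit law $\alpha\after\eta_X = \id_X$.) Axioms (1)--(3) of Definition~\ref{ConvexSetDef} then reduce to straightforward consequences of the unit law and the commutativity of the formal sum in $\Dstr(X)$. Axiom~(4) is the one that really uses algebra structure: one shows both sides equal $\alpha(r x + (1-r)s\, y + (1-r)(1-s)\, z)$ by expanding via $\alpha\after\mu_X = \alpha\after\Dstr(\alpha)$ applied to the nested distribution $r\cdot \eta_X(x) + (1-r)\cdot (sy + (1-s)z)$.

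\textbf{From convex sets to algebras.} Given $(X,\tuple{-,-,-})$, define $\alpha\colon\Dstr(X)\rightarrow X$ on a formal convex sum $\varphi = \sum_{i=1}^{n} r_{i}x_{i}$ (with the $x_i$ distinct and all $r_i>0$) by induction on $n$: set $\alpha(1x) = x$, $\alpha(rx+(1-r)y) = \tuple{r,x,y}$, and for $n\geq 3$ put
\[
\alpha\Bigl(\sum_{i=1}^{n}r_{i}x_{i}\Bigr)
\;=\; \Bigl\langle r_{1},\; x_{1},\; \alpha\Bigl(\sum_{i=2}^{n}\tfrac{r_{i}}{1-r_{1}}x_{i}\Bigr)\Bigr\rangle.
\]
This is the delicate part: one must prove well-definedness, i.e.\ independence of the chosen ordering of the summands. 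Axiom~(1) handles swaps of adjacent entries, axiom~(2) absorbs duplicated points, and axiom~(4) is exactly the rebracketing needed to show that two different ways of peeling off entries give equal results; together they generate the full symmetric-group action on summands. Once $\alpha$ is well-defined, the unit law holds by the case $n=1$, and the multiplication law $\alpha\after\mu = \alpha\after\Dstr(\alpha)$ follows by induction on the depth of a nested distribution, using that $\alpha$ commutes with the fourth-axiom rebracketing.

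\textbf{Morphisms and inverseness.} A map $f\colon X\rightarrow Y$ is affine iff it commutes with $\tuple{-,-,-}$ iff (by induction over~$n$, using the inductive recipe above) it commutes with $\alpha$ and $\beta$, so the two notions of morphism coincide. Starting from a convex structure, applying the ``to-algebra'' construction and then restricting to two-element supports plainly returns $\tuple{r,x,y}$; conversely, starting from an algebra $\alpha$, the induced binary operation unfolds by the multiplication law into $\alpha$ itself on every finite support, giving back the original algebra. Thus the two constructions are mutually inverse functors, and $\Cat{Conv}\cong\Alg(\Dstr)$. The main obstacle is verifying well-definedness of the inductively defined $\alpha$ under permutation of summands; everything else is bookkeeping.
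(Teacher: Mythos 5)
Your proposal is correct and follows essentially the same route as the paper's own proof sketch: the same formula $\tuple{r,x,y}=\alpha(rx+(1-r)y)$ in one direction, the same recursive peeling-off-and-rescaling definition of $\alpha$ in the other, with well-definedness under permutation of summands reduced to adjacent transpositions and the multiplication law handled via the rebracketing identity for nested tuples. You correctly identify the permutation-invariance of the inductively defined $\alpha$ as the main technical obstacle, which is exactly where the paper concentrates its effort.
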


\begin{proof}
Given an algebra $\alpha\colon \Dstr(X)\rightarrow X$ on a set $X$ one
defines an operation $\tuple{-,-,-} \colon \unitR \times X \times X
\rightarrow X$ by:
\begin{equation}
\label{ConvFromAlgEqn}
\begin{array}{rcl}
\tuple{r,x,y}
& = &
\alpha(rx + (1-r)y).
\end{array}
\end{equation}

\noindent It is not hard to show that the four requirements from
Definition~\ref{ConvexSetDef} hold.

\auxproof{
\begin{enumerate}
\item $\tuple{r,x,y} = \alpha(rx + (1-r)y) = \alpha((1-r)y + rx) =
\tuple{1-r, y, x}$.

\item $\tuple{r,x,x} = \alpha(rx + (1-r)x) = \alpha(1x) = \alpha(\eta(x)) = x$.

\item $\tuple{0,x,y} = \alpha(0x + (1-0)y) = \alpha(1y) = y$.

\item Using that $\alpha \after \mu = \alpha \after \Dstr(\alpha)
  \after \alpha$ we show that both sides are equal.
$$\begin{array}{rcl}
\tuple{r, x, \tuple{s,y,z}}
& = &
\alpha(rx + (1-r)\alpha(sy + (1-s)z)) \\
& = &
\alpha(r\alpha(1x) + (1-r)\alpha(sy + (1-s)z)) \\
& = &
\alpha(rx + (1-r)sy + (1-r)(1-s)z).
\end{array}$$

\noindent Similarly,
$$\begin{array}{rcl}
\lefteqn{\textstyle
   \tuple{r+(1-r)s, \; \tuple{\frac{r}{r+(1-r)s}, x, y}, \; z}} \\
& = &
\tuple{r+(1-r)s, \; \alpha(\frac{r}{r+(1-r)s}x + 
   \big(1-\frac{r}{r+(1-r)s}\big)y), \; z} \\
& = &
\alpha\Big((r+(1-r)s)\alpha(\frac{r}{r+(1-r)s}x + 
   \frac{(1-r)s}{r+(1-r)s}y) \\
& & \qquad +\; (1 - (r+(1-r)s))z\Big) \qquad \mbox{since} \\
& & \qquad 
   1-\frac{r}{r+(1-r)s} = 
   \frac{r+(1-r)s}{r+(1-r)s} - \frac{r}{r+(1-r)s} =
   \frac{(1-r)s}{r+(1-r)s} \\
& = &
\alpha\Big((r+(1-r)s)\alpha(\frac{r}{(r+(1-r)s)}x + 
   \frac{(1-r)s}{(r+(1-r)s)}y) \\
& & \qquad +\; (1 - r)(1-s)\alpha(1z)\Big) \qquad \mbox{since} \\
& & \qquad
   1 - (r+ (1-r)s) = 1 - r - s + rs = (1-r)(1-s) \\
& = &
\alpha(ra + (1-r)sy + (1-r)(1-s)z).
\end{array}$$
\end{enumerate}

It is not hard to see that an algebra map
$(\Dstr(X)\stackrel{\alpha}{\rightarrow} X)
\stackrel{f}{\longrightarrow} (\Dstr(Y)\stackrel{\beta}{\rightarrow}
Y)$ yields a map of convex structures, since:
$$\begin{array}{rcl}
f(\tuple{r,x,x'}_{X})
\hspace*{\arraycolsep}=\hspace*{\arraycolsep}
f(\alpha(rx + (1-r)x'))
& = &
\beta(\Dstr(f)(rx + (1-r)x')) \\
& = &
\beta(rf(x) + (1-r)f(x')) \\
& = &
\tuple{r, f(x), f(x')}_{Y}.
\end{array}$$
}

Conversely, given a convex set $X$ with operation $\tuple{-,-,-}$ one
defines a function $\alpha\colon \Dstr(X)\rightarrow X$ inductively by:
\begin{equation}
\label{AlgFromConvRecEqn}
\hspace*{-2em}\begin{array}{rcl}
\lefteqn{\alpha(r_{1}x_{1} + \cdots + r_{n}x_{n})} \\
& = &
\left\{\begin{array}{ll}
x_{1} & \mbox{if $r_{1}=1$, so $r_{2}=\cdots= r_{n}$ \rlap{$=0$}} \\
\tuple{r_{1}, x_{1}, \alpha(\frac{r_2}{1-r_1}x_{2} + \cdots +
    \frac{r_n}{1-r_1}x_{n})} & \mbox{otherwise, \textit{i.e.}~$r_{1}<1$.}
\end{array}\right.
\end{array}
\end{equation}

\noindent Repeated application of this definition yields:
\begin{equation}
\label{AlgFromConvEqn}
\hspace*{-.8em}\begin{array}{rcl}
\lefteqn{\alpha(r_{1}x_{1} + \cdots + r_{n}x_{n})} \\
& = &
\tuple{r_{1}, x_{1}, \tuple{\frac{r_{2}}{1-r_{1}}, x_{2},
   \tuple{\frac{r_{3}}{1-r_{1}-r_{2}}, x_{3}, \tuple{ \ldots,
   \tuple{\frac{r_{n-1}}{1-r_{1}-\cdots-r_{n-2}}, x_{n-1}, x_{n}}\ldots}}}}.
\end{array}
\end{equation}

\noindent One first has to show that the function $\alpha$
in~\eqref{AlgFromConvRecEqn} is well-defined, in the sense that it
does not depend on permutations of summands, see
also~\cite[Lemma~2]{Stone49}. Via some elementary calculations one
checks that exchanging the summands $r_{i}x_{i}$ and $r_{i+1}x_{i+1}$
produces the same result. In a next step one proves the algebra
equations: $\alpha \after \eta = \idmap$ and $\alpha \after \mu =
\alpha \after \Dstr(\alpha)$.  The first one is easy, since
$\alpha(\eta(a)) = \alpha(1a) = a$, directly by
applying~\eqref{AlgFromConvRecEqn}. The second one requires more
work. Explicitly, it amounts to:
\begin{equation}
\label{AlgFromConvMuEqn}
\begin{array}{rcl}
\alpha\big(\sum_{i\leq n}r_{i}\alpha(\sum_{j\leq m_i}s_{ij}x_{ij})\big)
& = &
\alpha\big(\sum_{i\leq n}\sum_{j\leq m_i} (r_{i}s_{ij})x_{ij}\big).
\end{array}
\end{equation}

\noindent For the proof the following auxiliary result is
convenient. It handles nested tuples in the second argument of a
triple $\tuple{-,-,-}$, just like condition~(4) in
Definition~\ref{ConvexSetDef} deals with nested structure in the third
argument. In a general convex structure one has:
\begin{equation}
\label{AlgFromConvMuAuxLem}
\begin{array}{rcl}
\tuple{r, \tuple{s, x, y}, z}
& = &
\tuple{rs, x, \tuple{\frac{r(1-s)}{1-rs}, y, z}}.
\end{array}
\end{equation}

\noindent assuming $rs\neq 1$. The rest is then left to the reader. \QED

\auxproof{
We first check this second formulation~\eqref{AlgFromConvEqn}. 
Before $x_3$ one has:
$$\frac{\frac{r_{3}}{1-r_{1}}}{1-\frac{r_{2}}{1-r_{1}}}
=
\frac{\frac{r_{3}}{1-r_{1}}}{\frac{1-r_{1}}{1-r_{1}}-\frac{r_{2}}{1-r_{1}}}
=
\frac{\frac{r_{3}}{1-r_{1}}}{\frac{1-r_{1}-r_{2}}{1-r_{1}}}
=
\frac{r_{3}}{1-r_{1}-r_{2}}$$

\noindent etc., until before $x_n$ one gets:
$$\frac{\frac{r_{n}}{1-r_{1}-\cdots-r_{n-2}}}
   {1-\frac{r_{n-1}}{1-r_{1}-\cdots-r_{n-2}}}
=
\frac{r_{n}}{1-r_{1}-\cdots-r_{n-2}-r_{n-1}}
=
\frac{r_n}{r_{n}}
=
1.
$$


Each permutation is a composition of adjacent transpositions of
summands (swap elements at positions $i$ and $i+1$).  Hence we
concentrate on such swaps, say of summands $r_{i}x_{i}$ and
$r_{i+1}x_{i+1}$. The result of applying $\alpha$ is then calculated
as:
$$\begin{array}{rcl}
\lefteqn{\alpha(r_{1}x_{1} + \cdots r_{i-1}x_{i-1} + r_{i+1}x_{i+1} +
   r_{i}x_{i} + r_{i+2}x_{i+2} + \cdots + r_{n}x_{n})} \\
& = &
\langle r_{1}, x_{1}, \langle \frac{r_{2}}{1-r_{1}}, x_{2}, \ldots 
   \langle \frac{r_{i+1}}{1-r_{1}-\cdots-r_{i-1}}, x_{i+1}, \\
& & \quad \tuple{\frac{r_{i}}{1-r_{1}-\cdots-r_{i-1}-r_{i+1}}, x_{i}, \ldots
   \tuple{\frac{r_{n-1}}{1-r_{1}-\cdots-r_{n-2}}, x_{n-1}, x_{n}}\ldots}
   \rangle\rangle\rangle.
\end{array}$$

\noindent In order to show that these are equal it suffices to show
the equality of:
$$\begin{array}{rcl}
\tuple{s_{i}, x_{i}, \tuple{s_{i+1}, x_{i+1}, b}}
& = &
\tuple{t_{i+1}, x_{i+1}, \tuple{t_{i}, x_{i}, b}},
\end{array}\eqno{(*)}$$

\noindent where:
$$\begin{array}{rclcrcl}
s_{i}
& = &
\frac{r_{i}}{1-r_{1}-\cdots-r_{i-1}}
& \quad &
s_{i+1}
& = &
\frac{r_{i+1}}{1-r_{1}-\cdots-r_{i-1}-r_{i}} \\
t_{i}
& = &
\frac{r_{i}}{1-r_{1}-\cdots-r_{i-1}-r_{i+1}}
& &
t_{i+1}
& = &
\frac{r_{i+1}}{1-r_{1}-\cdots-r_{i-1}}.
\end{array}$$

\noindent We obtain $(*)$ via the equations in
Definition~\ref{ConvexSetDef}, first applied to the left-hand-side:
$$\begin{array}{rcl}
\tuple{s_{i}, x_{i}, \tuple{s_{i+1}, x_{i+1}, b}} 
& = &
\tuple{s_{i} + (1-s_{i})s_{i+1}, \tuple{
   \frac{s_{i}}{s_{i} + (1-s_{i})s_{i+1}}, x_{i}, x_{i+1}}, b} \\
& = &
\tuple{s_{i} + (1-s_{i})s_{i+1}, \tuple{
   1-\frac{s_{i}}{s_{i} + (1-s_{i})s_{i+1}}, x_{i+1}, x_{i}}, b} \\
& = &
\tuple{s_{i} + (1-s_{i})s_{i+1}, \tuple{
   \frac{(1-s_{i})s_{i+1}}{s_{i} + (1-s_{i})s_{i+1}}, x_{i+1}, x_{i}}, b} \\
& = &
\tuple{\frac{r_{i}+r_{i+1}}{1-r_{1}-\cdots-r_{i-1}}, \tuple{
   \frac{r_{i+1}}{r_{i}+r_{i+1}}, x_{i+1}, x_{i}}, b},
\end{array}$$

\noindent since:
$$\begin{array}{rcl}
(1-s_{i})s_{i+1}
& = &
(1-\frac{r_{i}}{1-r_{1}-\cdots-r_{i-1}})
   \frac{r_{i+1}}{1-r_{1}-\cdots-r_{i-1}-r_{i}} \\
& = &
\frac{1-r_{1}-\cdots-r_{i-1}-r_{i}}{1-r_{1}-\cdots-r_{i-1}}
   \frac{r_{i+1}}{1-r_{1}-\cdots-r_{i-1}-r_{i}} \\
& = &
\frac{r_{i+1}}{1-r_{1}-\cdots-r_{i-1}}.
\end{array}$$

\noindent Hence
$$\begin{array}{rclcrcl}
s_{i} + (1-s_{i})s_{i+1}
& = &
\frac{r_{i}+r_{i+1}}{1-r_{1}-\cdots-r_{i-1}}
& \mbox{ and } &
\frac{(1-s_{i})s_{i+1}}{s_{i} + (1-s_{i})s_{i+1}}
& = &
\frac{r_{i+1}}{r_{i}+r_{i+1}}.
\end{array}$$

\noindent Similarly, the right-hand-side of $(*)$ satisfies:
$$\begin{array}{rcl}
\tuple{t_{i+1}, x_{i+1}, \tuple{t_{i}, x_{i}, b}}
& = &
\tuple{t_{i+1} + (1-t_{i+1})t_{i}, \tuple{
   \frac{t_{i+1}}{t_{i+1} + (1-t_{i+1})t_{i}}, x_{i+1}, x_{i}}, b}
\end{array}$$

\noindent which is the same as above, since:
$$\begin{array}{rcl}
(1-t_{i+1})t_{i}
& = &
   (1-\frac{r_{i+1}}{1-r_{1}-\cdots-r_{i-1}})
   \frac{r_{i}}{1-r_{1}-\cdots-r_{i-1}-r_{i+1}} \\
& = &
   \frac{1-r_{1}-\cdots-r_{i-1}-r_{i+1}}{1-r_{1}-\cdots-r_{i-1}}
   \frac{r_{i}}{1-r_{1}-\cdots-r_{i-1}-r_{i+1}} \\
& = &
   \frac{r_{i}}{1-r_{1}-\cdots-r_{i-1}}
\end{array}$$

\noindent and so we get the same outcomes:
$$\begin{array}{rclcrcl}
t_{i+1} + (1-t_{i+1})t_{i}
& = &
\frac{r_{i} + r_{i+1}}{1-r_{1}-\cdots-r_{i-1}}
& \mbox{ and } &
\frac{t_{i+1}}{t_{i+1} + (1-t_{i+1})t_{i}}
& = &
\frac{r_{i+1}}{r_{i}+r_{i+1}}.
\end{array}$$

Towards the $\mu$-equation for $\alpha$ we first prove the
equation~\eqref{AlgFromConvMuAuxLem}: we simply swap arguments and
use~(4) from Definition~\ref{ConvexSetDef}:
$$\begin{array}{rcl}
\tuple{r, \tuple{s, x, y}, z}
& = &
\tuple{1-r, z, \tuple{1-s, y, x}} \\
& = &
\tuple{1-rs, \tuple{\frac{1-r}{1-rs}, z, y}, x}, \quad \mbox{since} \\
& & \quad 
   (1-r) + (1 - (1-r))(1-s) = 1-r + r(1-s) = 1-rs \\
& = &
\tuple{rs, x, \tuple{1-\frac{1-r}{1-rs}, y, z}} \\
& = &
\tuple{rs, x, \tuple{\frac{r(1-s)}{1-rs}, y, z}},
\end{array}$$

\noindent since $1-rs - (1-r) = -rs + r = r(1-s)$.

Now we are in a position to prove the
equation~\eqref{AlgFromConvMuEqn}, by applying this lemma suitably
many times, where we simply assume that all relevant denominators are
non-zero. We start with the right-hand-side of~\eqref{AlgFromConvMuEqn},
using~\eqref{AlgFromConvEqn}.
$$\begin{array}{rcl}
\lefteqn{\alpha\big((r_{1}s_{11})x_{11} + \cdots (r_{1}s_{1m_1})x_{1m_1} + \cdots +
   (r_{n}s_{n1})x_{n1} + \cdots (r_{n}s_{nm_n})x_{nm_n}\big)} \\
& = &
\langle r_{1}s_{11}, x_{11}, \langle \frac{r_{1}s_{12}}{1-r_{1}s_{11}}, x_{12},
  \langle \frac{r_{1}s_{13}}{1-r_{1}s_{11}-r_{1}s_{12}}, x_{13},
  \ldots, \langle \frac{r_{1}s_{1m_{1}}}{1-r_{1}s_{1m_1}}, x_{1m_{1}}, \\
& & \quad
   \langle\frac{r_{2}s_{21}}{1-r_{1}}, x_{21}, 
      \langle\frac{r_{2}s_{22}}{1-r_{1}-r_{2}s_{21}}, x_{22}, \langle \cdots \\
& & \quad
   \langle s_{n1}, x_{n1}, \langle \frac{s_{n2}}{1-s_{n1}}, x_{n2}\cdots,
   \langle\frac{s_{n(m_{n}-1)}}{1-s_{n1}-\cdots-s_{n(m_{n}-2)}}, x_{n(m_{n}-1)},
    x_{nm_{n}}\rangle \cdots \rangle\rangle \cdots \rangle\rangle\rangle\rangle
   \cdots \rangle\rangle\rangle \\
& & \quad\mbox{since }
  \frac{r_{n}s_{n1}}{1-r_{1}-\cdots-r_{n-1}} = \frac{r_{n}s_{n1}}{r_{n}} = s_{n1}
  \mbox{ and }
  \frac{r_{n}s_{n2}}{r_{n}-r_{n}s_{n1}} = \frac{s_{n2}}{1-s_{n1}}
\end{array}$$

\noindent By starting at the other end we get the same:
$$\begin{array}{rcl}
\lefteqn{\textstyle
   \alpha\big(\sum_{i\leq n}r_{i}\alpha(\sum_{j\leq m_i}s_{ij}x_{ij})\big)} \\
& = &
\alpha\big(r_{1}\alpha(s_{11}x_{11}+\cdots+x_{1m_1}) + r_{2}\alpha(\cdots) +
   \cdots + r_{n}\alpha(\cdots)\big) \\
& = &
\tuple{r_{1}, \alpha(s_{11}x_{11}+\cdots+x_{1m_1}), 
   \alpha(\frac{r_2}{1-r_1}\alpha(\cdots) + \cdots +
   \frac{r_n}{1-r_1}\alpha(\cdots))} \\
& = &
\tuple{r_{1}, \tuple{s_{11}, x_{11}, \alpha(\frac{s_{12}}{1-s_{11}}x_{12}
   + \cdots + \frac{s_{1m_1}}{1-s_{11}}x_{1m_1})}, 
   z_{2}}, \\
& & \quad \mbox{for } z_{2} = \alpha(\frac{r_2}{1-r_1}\alpha(\cdots) + \cdots +
   \frac{r_n}{1-r_1}\alpha(\cdots)) \\
& = &
\tuple{r_{1}s_{11}, x_{11}, \tuple{\frac{r_{1}(1-s_{11})}{1-r_{1}s_{11}},
   \alpha(\frac{s_{12}}{1-s_{11}}x_{12}
   + \cdots + \frac{s_{1m_1}}{1-s_{11}}x_{1m_1}), z_{2}}} 
   \quad\mbox{by~\eqref{AlgFromConvMuAuxLem}} \\
& = &
\qquad \vdots \\
& = &
\tuple{r_{1}s_{11}, x_{11}, \tuple{\frac{r_{1}s_{12}}{1-r_{1}s_{11}}, x_{12},
   \tuple{\frac{r_{1}(1-s_{11}-s_{12})}{1-r_{1}s_{11}-r_{1}s_{12}}, 
   \alpha(\frac{s_{13}}{1-s_{11}-s_{12}}x_{13}
   + \cdots + \frac{s_{1m_1}}{1-s_{11}-s_{12}}x_{1m_1}), z_{2}}}} \\
& & \qquad\mbox{since }
   \frac{r_{1}(1-s_{11})}{1-r_{1}s_{11}} \cdot \frac{s_{12}}{1-s_{11}} =
   \frac{r_{1}s_{12}}{1-r_{1}s_{11}} \\
& & \qquad
   \mbox{and } 
   \frac{\frac{r_{1}(1-s_{11})}{1-r_{1}s_{11}} (1 - \frac{s_{12}}{1-s_{11}})}
        {1- \frac{r_{1}s_{12}}{1-r_{1}s_{11}}} = 
      \frac{\frac{r_{1}(1-s_{11})}{1-r_{1}s_{11}} \cdot
            \frac{1-s_{11} - s_{12}}{1-s_{11}}}
        {\frac{1-r_{1}s_{11} - r_{1}s_{12}}{1-r_{1}s_{11}}} = 
   \frac{r_{1}(1-s_{11}-s_{12})}{1-r_{1}s_{11}-r_{1}s_{12}} \\
& & \qquad\mbox{and } 
   \frac{\frac{s_{1i}}{1-s_{11}}}{1 - \frac{s_{12}}{1-s_{11}}} = 
   \frac{s_{1i}}{1-s_{11}-s_{12}} \\
& = &
\qquad \vdots \\
& = &
\langle r_{1}s_{11}, x_{11}, \langle \frac{r_{1}s_{12}}{1-r_{1}s_{11}}, x_{12},
   \langle \frac{r_{1}s_{13}}{1-r_{1}s_{11}-r_{1}s_{12}}, x_{13}, \cdots, \\
& & \qquad 
   \langle \frac{r_{1}s_{1(m_{1}-1)}}{1-r_{1}s_{11}-\cdots-r_{1}s_{1(m_{1}-2)}}, 
    x_{1(m_{1}-1)}, \langle\frac{r_{1}s_{1m_{1}}}{1-r_{1}s_{1m_{1}}}, 
    x_{1m_{1}}, z_{2}\rangle\rangle
   \cdots\rangle\rangle\rangle \\
& & \qquad\mbox{since }
   \frac{r_{1}(1-s_{11}-\cdots s_{1(m_{1}-1)})}
    {1-r_{1}s_{11}-\cdots-r_{1}s_{1(m_{1}-1)}} = 
   \frac{r_{1}s_{1m_{1}}}{1-r_{1}s_{1m_{1}}}.
\end{array}$$

\noindent Continuing with $z_{2}$ we get:
$$\begin{array}{rcl}
z_{2}
& = &
\alpha(\frac{r_2}{1-r_1}\alpha(\cdots) + \cdots +
   \frac{r_n}{1-r_1}\alpha(\cdots)) \\
& = &
\tuple{\frac{r_2}{1-r_{1}}, \alpha(s_{21}x_{21} + 
   \cdots + s_{2n_{2}}x_{2n_{2}}), z_{3}}\rangle\rangle \\
& & \qquad \mbox{where } z_{3} = 
   \alpha(\frac{r_3}{1-r_{1}-r_{2}}\alpha(\cdots) + \cdots +
   \frac{r_n}{1-r_{1}-r_{2}}\alpha(\cdots))  \\
& & \qquad \mbox{since }
   \frac{\frac{r_3}{1-r_{1}}}{1-\frac{r_2}{1-r_{1}}} = 
   \frac{r_3}{1-r_{1}-r_{2}} \\
& = &
\tuple{\frac{r_{2}s_{21}}{1-r_{1}}, x_{21}, 
   \tuple{\frac{r_{2}(1-s_{21})}{1-r_{1}-r_{2}s_{21}},
   \alpha(\frac{s_{22}}{1-s_{21}}x_{22} +  \cdots + 
          \frac{s_{2n_{2}}}{1-s_{21}}x_{2n_{2}}), z_{3}}} \\
& & \qquad\mbox{since }
   \frac{\frac{r_2}{1-r_{1}}(1-s_{21})}{1-\frac{r_2}{1-r_{1}}\cdot s_{21}} =
   \frac{r_{2}(1-s_{21})}{1-r_{1}-r_{2}s_{21}} \\
& = &
\langle \frac{r_{2}s_{21}}{1-r_{1}}, x_{21}, \langle 
   \frac{r_{2}s_{22}}{1-r_{1}-r_{2}s_{21}}, x_{22}, \langle
   \frac{r_{2}(1-s_{21}-s_{22})}{1-r_{1}-r_{2}s_{21}-r_{2}s_{22}}, \\
& & \quad \alpha(\frac{s_{23}}{1-s_{21}-s_{22}}x_{23} +  \cdots + 
          \frac{s_{2n_{2}}}{1-s_{21}-s_{22}}x_{2n_{2}}), 
   z_{3}\rangle\rangle\rangle \\
& & \qquad \mbox{since }
   \frac{r_{2}(1-s_{21})}{1-r_{1}-r_{2}s_{21}}\cdot \frac{s_{22}}{1-s_{21}} =
   \frac{r_{2}s_{22}}{1-r_{1}-r_{2}s_{21}}, \\
& & \qquad\mbox{and }
   \frac{\frac{r_{2}(1-s_{21})}{1-r_{1}-r_{2}s_{21}}(1-\frac{s_{22}}{1-s_{21}})}
        {1-\frac{r_{2}s_{22}}{1-r_{1}-r_{2}s_{21}}} =
   \frac{\frac{r_{2}(1-s_{21})}{1-r_{1}-r_{2}s_{21}} \cdot 
         \frac{1-s_{21}-s_{22}}{1-s_{21}}}
        {\frac{1-r_{1}-r_{2}s_{21}-r_{2}s_{22}}{1-r_{1}-r_{2}s_{21}}} =
   \frac{r_{2}(1-s_{21}-s_{22})}{1-r_{1}-r_{2}s_{21}-r_{2}s_{22}} \\
& & \qquad\mbox{and }
   \frac{\frac{s_{23}}{1-s_{22}}}{1-\frac{s_{22}}{1-s_{21}}} =
   \frac{s_{23}}{1-s_{21}-s_{22}} \\
& = &
\qquad \vdots \\
& = &
\langle \frac{r_{2}s_{21}}{1-r_{1}}, x_{21}, \langle 
   \frac{r_{2}s_{22}}{1-r_{1}-r_{2}s_{21}}, x_{22}, \langle
   \frac{r_{2}s_{23}}{1-r_{1}-r_{2}s_{21}-r_{2}s_{22}}, x_{23}, \cdots \\
& & \quad \langle \frac{r_{2}s_{2(m_{2}-1)}}
   {1-r_{1}-r_{2}s_{21}-\cdots-r_{2}s_{2(m_{2}-2)}}, x_{2(m_{2}-1)}, 
   \langle \frac{r_{2}(1-s_{2m_2})}{1-r_{1}-r_{2}s_{2m_2}}, 
   x_{2m_{2}}, z_{3}\rangle\rangle\cdots\rangle\rangle\rangle \\
& & \qquad\mbox{since }
   \frac{r_{2}(1-s_{21}-\cdots-s_{2(m_{2}-1)})}
        {1-r_{1}-r_{2}s_{21}-\cdots-r_{2}s_{2(m_{2}-1)}} = 
   \frac{r_{2}(1-s_{2m_2})}{1-r_{1}-r_{2}s_{2m_2}}
\end{array}$$

\noindent Finally for $z_n$ we get:
$$\begin{array}{rcl}
z_{n}
& = &
\alpha(\frac{r_{n}}{1-r_{1}-\cdots-r_{n-1}}\alpha(s_{n1}x_{n1} + 
   \cdots + s_{nm_n}x_{nm_{n}})) \\
& = &
\alpha(s_{n1}x_{n1} + \cdots + s_{nm_n}x_{nm_{n}}) \\
& = &
\tuple{s_{n1}, x_{n1}, \tuple{\frac{s_{n2}}{1-s_{n1}}, x_{n2}, \ldots,
   \tuple{\frac{s_{n(m_{n}-1)}}{1-s_{n1}-\cdots-s_{n(m_{n}-2)}}, x_{n(m_{n}-1)},
   x_{n}}}}.
\end{array}$$

\noindent Hence we are done.

Next we need to check that a map $f\colon (X,\tuple{-,-,-}_{X})
\rightarrow (y, \tuple{-,-,-}_{Y})$ of convex structures yields an
algebra homomorphism, between the induced algebras, from
$\alpha\colon\Dstr(X)\rightarrow X)$ to $(\beta\colon \Dstr(Y)
\rightarrow Y)$. This follows easily from~\eqref{AlgFromConvEqn}:
$$\begin{array}{rcl}
\lefteqn{\big(f\after\alpha\big)(r_{1}x_{1} + \cdots + r_{n}x_{n}))} \\
& = &
f\big(\alpha(r_{1}x_{1} + \cdots + r_{n}x_{n})\big) \\
& = &
f\big(\tuple{r_{1}, x_{1}, \tuple{\frac{r_{2}}{1-r_{1}}, x_{2},
    \tuple{ \ldots,\tuple{\frac{r_{n-1}}{1-r_{1}-\cdots-r_{n-2}}, 
   x_{n-1}, x_{n}}_{X}\ldots}_{X}}_{X}}_{X}\big) \\
 & = &
\tuple{r_{1}, f(x_{1}), \tuple{\frac{r_{2}}{1-r_{1}}, f(x_{2}),
   \tuple{\ldots, \tuple{\frac{r_{n-1}}{1-r_{1}-\cdots-r_{n-2}}, 
   f(x_{n-1}), f(x_{n})}_{Y}\ldots}_{Y}}_{Y}}_{Y} \\
& = &
\beta\big(r_{1}f(x_{1}) + \cdots + r_{n}f(x_{n})\big) \\
& = &
\big(\beta \after \Dstr(f)\big)(r_{1}x_{1} + \cdots + r_{n}x_{n}).
\end{array}$$

Finally we need to check that the transformations of an algebra into
a convex structure and vice-versa are each others inverses. First,
starting from an algebra $\alpha\colon\Dstr(X)\rightarrow X$ the
induced convex structure as in~\eqref{ConvFromAlgEqn} yields an
algebra as in~\eqref{AlgFromConvEqn} given by:
$$\begin{array}{rcl}
\lefteqn{r_{1}x_{1} + \cdots + r_{n}x_{n}} \\
& \longmapsto &
\tuple{r_{1}, x_{1}, \tuple{\frac{r_{2}}{1-r_{1}}, x_{2},
    \tuple{ \ldots,\tuple{\frac{r_{n-1}}{1-r_{1}-\cdots-r_{n-2}}, 
   x_{n-1}, x_{n}}\ldots}}} \\
& = &
\alpha(r_{1}x_{1} + (1-r_{1})\tuple{\frac{r_{2}}{1-r_{1}}, x_{2},
    \tuple{ \ldots,\tuple{\frac{r_{n-1}}{1-r_{1}-\cdots-r_{n-2}}, 
   x_{n-1}, x_{n}}\ldots}}) \\
& = &
\alpha(r_{1}x_{1} + (1-r_{1})\alpha(\frac{r_{2}}{1-r_{1}}x_{2} +
    \frac{1-r_{1}-r_{2}}{1-r_{1}}\tuple{\cdots
   \tuple{\frac{r_{n-1}}{1-r_{1}-\cdots-r_{n-2}}, 
   x_{n-1}, x_{n}}\ldots})) \\
& = &
\qquad\vdots \\
& = &
\alpha(r_{1}x_{1} + (1-r_{1})\alpha(\frac{r_{2}}{1-r_{1}}x_{2} +
    \frac{1-r_{1}-r_{2}}{1-r_{1}}\alpha(\cdots \\
& & \qquad
   \frac{1-r_{1}-\cdots-r_{n-1}}{1-r_{1}-\cdots-r_{n-2}}
   \alpha(\frac{r_{n-1}}{1-r_{1}-\cdots-r_{n-2}}x_{n-1}, 
    \frac{r_{n}}{1-r_{1}-\cdots-r_{n-2}}x_{n})\ldots))) \\
& = &
\alpha(r_{1}\alpha(1x_{1}) + (1-r_{1})\alpha(\frac{r_{2}}{1-r_{1}}x_{2} +
    \frac{1-r_{1}-r_{2}}{1-r_{1}}\alpha(\cdots \\
& & \qquad
   \frac{1-r_{1}-\cdots-r_{n-1}}{1-r_{1}-\cdots-r_{n-2}}
   \alpha(\frac{r_{n-1}}{1-r_{1}-\cdots-r_{n-2}}x_{n-1}, 
    \frac{r_{n}}{1-r_{1}-\cdots-r_{n-2}}x_{n})\ldots))) \\
& = &
\alpha(r_{1}x_{1} + r_{2}x_{2} +
    \frac{1-r_{1}-r_{2}}{1-r_{1}}\alpha(\cdots \\
& & \qquad
   \frac{1-r_{1}-\cdots-r_{n-1}}{1-r_{1}-\cdots-r_{n-2}}
   \alpha(\frac{r_{n-1}}{1-r_{1}-\cdots-r_{n-2}}x_{n-1}, 
    \frac{r_{n}}{1-r_{1}-\cdots-r_{n-2}}x_{n})\ldots)) \\
& = &
\qquad\vdots \\
& = &
\alpha(r_{1}x_{1} + \cdots + r_{n}x_{n}).
\end{array}$$

Conversely, starting from a convex structure $(X, \tuple{-,-,-})$
one obtains an algebra $\alpha$ as in~\eqref{AlgFromConvEqn}, which
in turn induces a structure like in~\eqref{ConvFromAlgEqn}, given by:
$$\begin{array}{rcccl}
(r, x, y) 
& \longmapsto &
\alpha(rx + (1-r)y) 
& = &
\tuple{r, x, y}.
\end{array}$$
}
\end{proof}

This theorem now allows us to apply Theorem~\ref{MonadAlgStructThm}
to the category $\Cat{Conv}$ of (real) convex structures. First we may
conclude that it is both complete and cocomplete; also, that the
forgetful functor $\Cat{Conv} \rightarrow \Sets$ has a left adjoint,
giving free convex structures of the form $\Dstr(X)$. And since
$\Dstr$ is a commutative monad, the category $\Cat{Conv}$ is symmetric
monoidal closed: maps $X\otimes Y\rightarrow Z$ in $\Cat{Conv}$
correspond to functions $X\times Y\rightarrow Z$ that are
``bi-homomorphisms'', \textit{i.e.}~homomorphisms of convex structures
in each variable separately. Closedness means that the functors
$(-)\otimes Y$ have a right adjoint, given by
$Y\multimap(-)$. Moreover, $\Dstr(A\times B) \cong \Dstr(A)\otimes
\Dstr(B)$, for set $A,B$.

Theorem~\ref{ConvexSetAlgThm} only applies to the particular monad
$\Dstr = \Dstr_{\nonnegR}$ from our family of monad $\Dstr_S$, for the
special case where the semiring $S$ is given by the non-negative real
numbers $\nonnegR$. Of course, one may try to formulate a notion of
``convex set'', like in Definition~\ref{ConvexSetDef} but more
generally, with respect to a semiring $S$, possibly with some
additional properties. But there is really no need to do so if we are
willing to work in terms of algebras of the monad $\Dstr_S$. In light
of Theorem~\ref{ConvexSetAlgThm} one may consider such algebras as a
generalised form of ``$S$-convex set'', and write $\Cat{Conv}_{S} =
\Alg(\Dstr_{S})$.  The only equations we thus have for such convex
sets are the algebra equations, see Definition~\ref{MonadAlgDef}, with
multiplication equation written explicitly
in~\eqref{AlgFromConvMuEqn}.  Proposition~\ref{DstrMltAlgAdjProp} then
describes an adjunction between $S$-convex sets and $S$-modules. This
line of thinking will be pursued in the next section.

\section{Prime filters in convex sets}\label{FilterSec}

The following definition generalises some familiar notions to
$S$-convex sets, \textit{i.e.}~to
$\Dstr_S$-algebras. In~\cite{Flood81} ideals instead of filters are
used.

\begin{definition}
\label{ConvSubAlgFilterDef}
Let $S$ be a semiring and $\alpha\colon \Dstr_{S}(X)\rightarrow X$ be
an algebra of the monad $\Dstr_S$, making $X$ convex. We write
$(\sum_{i\leq n}s_{i}x_{i})\in\Dstr_{S}(X)$ for an arbitrary convex
combination. A subset $U\subseteq X$ is called a:
\begin{itemize}
\item subalgebra if $\all{i\leq n}{x_{i}\in U}$ implies
  $\alpha(\sum_{i}s_{i}x_{i})\in U$;

\item filter if $\alpha(\sum_{i}s_{i}x_{i})\in U$ implies
$x_{i}\in U$, for each $i$ with $s_{i}\neq 0$;

\item prime filter if it is both a subalgebra and a filter.
\end{itemize}

An element $x\in X$ is called extreme, or a boundary point, if
$\{x\}$ is a prime filter. Often one writes $\partial X$ for
the set of extreme points.
\end{definition}

It is not hard to see that subalgebras are closed under arbitrary
intersections and under directed joins. Hence one can form the least
subalgebra $\overline{V} \subseteq X$ containing an arbitrary set
$V\subseteq X$, by intersection. Explicitly,
$$\begin{array}{rcl}
\overline{V}
& = &
\set{\alpha(\sum_{i}s_{i}x_{i})}{\all{i}{x_{i}\in V}}.
\end{array}$$

\noindent Filters are closed under arbitrary intersections and joins,
hence also prime filters are closed under arbitrary intersections and
directed joins. We shall write $\PrimeFilter(X)$ for the set of prime
filters in a convex set $X$, ordered by inclusion.

Notice that $\partial\unitR = \{0,1\}$ and $\overline{\{0,1\}} =
\unitR$. Hence the unit interval is generated by its boundary points.
In a free convex set $\Dstr_{S}(A)$ the elements $\eta(a) = 1a \in
\Dstr_{S}(A)$, for $a\in A$, are the only boundary points. They also
generate the whole convex set $\Dstr_{S}(A)$. In a quantum context a
state is called pure if it is a boundary point in the convex set of
states, see Section~\ref{EffAlgConvexSec}. The set of mixed states is
the closure of the set of pure states, given by convex combinations of
these pure states.

\begin{lemma}
\label{ConvIdealHomLem}
Assume $S$ is a non-trivial, zerosumfree and integral semiring and $X$ is
an $S$-convex set.  A subset $U\subseteq X$ is a prime filter if and
only if it is the ``true kernel'' $f^{-1}(1)$ of a homomorphism of
convex sets $f\colon X\rightarrow \{0,1\}$. It yields an
order isomomorphism:
$$\begin{array}{rcl}
\PrimeFilter(X)
& \cong &
\Hom(X, \{0,1\}).
\end{array}$$

\noindent (Here we consider $\{0,1\}$ as meet semilattice, with the
$S$-semimodule, and hence convex, structure described
in~\eqref{MSLsemimoduleEqn}.)
\end{lemma}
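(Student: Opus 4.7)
The plan is to exhibit the bijection $U \mapsto \chi_U$, where $\chi_U \colon X \to \{0,1\}$ is the characteristic function sending $x$ to $1$ iff $x \in U$, and show that it restricts to an order-isomorphism between prime filters and convex homomorphisms into $\{0,1\}$. The inverse sends $f$ to $f^{-1}(1)$.

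First I would unpack what it means for $f \colon X \to \{0,1\}$ to be a convex homomorphism. Using the scalar multiplication from~\eqref{MSLsemimoduleEqn}, the $\Dstr_S$-algebra $\beta$ on $\{0,1\}$ sends $\sum_i s_i y_i$ (with each $s_i \neq 0$) to $\bigwedge_i y_i$. Hence $f$ being a homomorphism is equivalent to the identity $f(\alpha(\sum_i s_i x_i)) = \bigwedge_{i : s_i \neq 0} f(x_i)$; note that we may restrict to the support because inserting zero-coefficient summands does not change $\alpha(\sum_i s_i x_i)$, and discarding them in $\{0,1\}$ is harmless.

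Next I would check the two directions. Given a convex homomorphism $f$, the set $U = f^{-1}(1)$ is a subalgebra because if all $x_i \in U$ then $f(\alpha(\sum_i s_i x_i)) = \bigwedge_i 1 = 1$, so $\alpha(\sum_i s_i x_i) \in U$; it is a filter because if $\alpha(\sum_i s_i x_i) \in U$ then $\bigwedge_{i : s_i \neq 0} f(x_i) = 1$ forces $f(x_i) = 1$, hence $x_i \in U$, for each $i$ with $s_i \neq 0$. Conversely, given a prime filter $U$, one verifies that $\chi_U$ is a convex homomorphism: modulo dropping zero coefficients, both $\chi_U(\alpha(\sum_i s_i x_i)) = 1$ and $\bigwedge_i \chi_U(x_i) = 1$ are equivalent to the statement ``$x_i \in U$ for all $i$'', by the subalgebra and filter conditions respectively.

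The correspondences $f \mapsto f^{-1}(1)$ and $U \mapsto \chi_U$ are mutually inverse essentially by construction. Finally, since both orders are pointwise/inclusion, the equivalence $U \subseteq V \iff \chi_U(x) \leq \chi_V(x) \text{ for all } x \in X$ gives the required order-isomorphism $\PrimeFilter(X) \cong \Hom(X, \{0,1\})$. The only subtle point, and thus the main thing to be careful about, is the handling of vanishing coefficients: one must exploit that the hypothesis on $S$ (non-triviality, zerosumfreeness, integrality) is precisely what made the semimodule structure on $\{0,1\}$ in~\eqref{MSLsemimoduleEqn} well defined, so that the biconditional between membership in $U$ and evaluating to $1$ under $\chi_U$ survives the passage through a convex combination.
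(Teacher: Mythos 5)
Your proposal is correct and follows essentially the same route as the paper: the characteristic-function bijection $U \mapsto \chi_U$ with inverse $f \mapsto f^{-1}(1)$, the observation that the induced algebra structure on $\{0,1\}$ interprets a convex combination as the meet of the summands with nonzero coefficient, and the resulting chain of equivalences matching the subalgebra and filter conditions against preservation of that meet. Your explicit attention to zero coefficients and to the order-isomorphism is a slightly fuller write-up of steps the paper compresses into ``similarly one shows \ldots'', but the argument is the same.
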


\begin{proof}
Let $\alpha\colon\Dstr_{S}(X)\rightarrow X$ be an algebra on $X$.
Given a prime filter $U\subseteq X$, define $f_{U}(x) = 1$ iff $x\in
U$. This yields a homormophism of algebras/convex sets, since for a
convex sum $\sum_{i}s_{i}x_{i}$ with $s_{i}\neq 0$,
$$\begin{array}{rcl}
(f_{U}\after\alpha)(\sum_{i}s_{i}x_{i}) = 1
& \Longleftrightarrow &
\alpha(\sum_{i}s_{i}x_{i}) \in U \\
& \Longleftrightarrow &
\all{i}{x_{i}\in U} \qquad\mbox{since $U$ is a prime filter} \\
& \Longleftrightarrow &
\all{i}{f_{U}(x_{i}) = 1} \\
& \Longleftrightarrow &
\sum_{i}s_{i}f_{U}(x) = \bigwedge_{i}f_{U}(x_{i}) = 1 \qquad
   \mbox{as in~\eqref{MSLsemimoduleEqn}} \\
& \Longleftrightarrow &
(\beta \after \Dstr_{S}(f_{U}))(\sum_{i}s_{i}x_{i}) = 1,
\end{array}$$

\noindent where $\beta\colon\Dstr_{S}(\{0,1\})\rightarrow \{0,1\}$ is
the convex structure induced by the meet semilattice structure of
$\{0,1\}$. Similarly one shows that
such homomorphisms induce prime filters as their true-kernels. \QED

\auxproof{
Given a homomorphism $f\colon X\rightarrow \{0,1\}$ the subset
$U_{f} = f^{-1}(1)$ is a prime filter since for a
convex sum $\sum_{i}s_{i}x_{i}$ with $s_{i}\neq 0$,
$$\begin{array}{rcl}
\alpha(\sum_{i}s_{i}x_{i})\in U_{f}
& \Longleftrightarrow &
f(\alpha(\sum_{i}s_{i}x_{i})) = \bigwedge_{i}f(x_{i}) = 1 \\
& \Longleftrightarrow &
\all{i}{f(x_{i}) = 1} \\
& \Longleftrightarrow &
\all{i}{x_{i}\in U_{f}}.
\end{array}$$

As to the order isomorphism, for $U,V\in\PrimeFilter(X)$,
$$\begin{array}{rcl}
U \subseteq V
& \Longleftrightarrow &
\all{x}{x\in U \Rightarrow x\in V} \\
& \Longleftrightarrow &
\all{x}{f_{U}(x) = 1 \Rightarrow f_{V}(x) = 1} \\
& \Longleftrightarrow &
f_{U} \leq f_{V}.
\end{array}$$
}
\end{proof}

We write \Cat{PreFrm} for the category of preframes. They consist of a
poset $L$ with directed joins $\dirjoin$ and finite meets $(1,\wedge)$
distributing over these joins: $x\conjun \dirjoin_{i}y_{i} =
\dirjoin_{i}x\conjun y_{i}$. Morphisms in \Cat{PreFrm} preserve both
finite meets and directed joins. The two-element set $\{0,1\}$ is
obviously a preframe. Homomorphisms of preframes $L\rightarrow\{0,1\}$
correspond (as true-kernels) to Scott-open filters $U\subseteq L$,
see~\cite{Vickers89}.  They are upsets, closed under finite meets,
with the property that if $\dirjoin_{i}x_{i}\in U$ then $x_{i}\in U$
for some $i$.

We have seen so far that taking prime filters yields a contravariant
functor $\PrimeFilter = \Hom(-,\{0,1\}) \colon \Cat{Conv}_{S} =
\Alg(\Dstr_{S}) \rightarrow \Cat{PreFrm}$. The main result of this
section shows that this forms actually a (dual) adjunction.

\begin{theorem}
\label{ConvPreFrmDualAdjThm}
For each non-trivial zerosumfree and integral semiring $S$ there is a dual
adjunction between $S$-convex sets and preframes:
$$\xymatrix{
\big(\Cat{Conv}_{S}\big)\op \ar@/_2ex/ [rr]_{\Hom(-,\{0,1\})} & \bot & 
   \Cat{PreFrm}\ar@/_2ex/[ll]_{\Hom(-,\{0,1\})}
}$$
\end{theorem}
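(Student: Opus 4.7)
The plan is to exhibit a natural bijection
$$\Cat{Conv}_{S}(X,\Hom(L,\{0,1\})) \;\cong\; \Cat{PreFrm}(L,\Hom(X,\{0,1\}))$$
by showing that both hom-sets are in bijection with a common set of ``bimorphisms'' $X\times L\to \{0,1\}$, namely functions convex in the first argument and preframe maps in the second.

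First I would verify that the two contravariant functors are well-defined. For an $S$-convex set $X$, Lemma~\ref{ConvIdealHomLem} identifies $\Hom_{\Cat{Conv}_{S}}(X,\{0,1\})$ with $\PrimeFilter(X)$ ordered by inclusion. As noted just before that lemma, prime filters are closed under arbitrary intersections (giving all finite meets, including the top $X$) and directed unions, while distributivity of binary meet over directed join is inherited from $\powerset(X)$; hence $\PrimeFilter(X)$ is a preframe. A convex map $g\colon X'\to X$ induces a preframe map $g^{-1}\colon\PrimeFilter(X)\to\PrimeFilter(X')$ by inverse image. Dually, for a preframe $L$, I would equip $\Hom_{\Cat{PreFrm}}(L,\{0,1\})$ with the pointwise convex structure coming from $\{0,1\}$, which by~\eqref{MSLsemimoduleEqn} sends a formal combination $\sum_{i}s_{i}f_{i}$ to the map $\ell\mapsto \bigwedge_{s_{i}\neq 0} f_{i}(\ell)$. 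The resulting function is again a preframe homomorphism because finite meet in $\{0,1\}$ distributes pointwise with both binary meet and directed joins in $L$, reducing the check to the fact that each $f_{i}$ is already a preframe map. A preframe map $h\colon L\to L'$ then induces a convex map $(-)\after h$ between the hom-sets, contravariantly.

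Second, I would establish the bijective correspondence. A morphism $\varphi\colon X\to \Hom_{\Cat{PreFrm}}(L,\{0,1\})$ in $\Cat{Conv}_{S}$ uncurries to a function $\hat\varphi\colon X\times L\to\{0,1\}$; the condition that $\varphi$ is a convex map (respectively, that each $\varphi(x)$ is a preframe map) unfolds, using the pointwise nature of both structures, to the condition that $\hat\varphi$ is convex in its first argument (respectively, a preframe map in its second). Symmetrically, a morphism $\psi\colon L\to \Hom_{\Cat{Conv}_{S}}(X,\{0,1\})$ in $\Cat{PreFrm}$ uncurries to a function $\tilde\psi\colon X\times L\to\{0,1\}$ satisfying the same two pointwise conditions. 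The assignments $\varphi\mapsto\hat\varphi$ and $\psi\mapsto\tilde\psi$ are therefore bijections onto the same set of bimorphisms, yielding the required bijection. Naturality in $X$ and $L$ is then a routine diagram chase, using that precomposition and postcomposition commute with pointwise operations.

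The main obstacle will be verifying the closure properties for the functors, in particular that a pointwise convex combination of preframe maps $L\to\{0,1\}$ is itself a preframe map. This rests on two ingredients that should not be taken for granted: the explicit description of the convex structure on $\{0,1\}$ from~\eqref{MSLsemimoduleEqn}, whose well-definedness uses the standing hypotheses that $S$ is non-trivial, zerosumfree and integral; and the distributivity axiom of the preframe $L$, which is what allows the pointwise meet $\bigwedge_{i}f_{i}$ to commute with directed joins. Once these are unpacked, all remaining checks reduce to the observation that the structures on both sides are defined pointwise over $\{0,1\}$.
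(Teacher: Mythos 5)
Your proposal is correct and follows essentially the same route as the paper: both hom-sets are shown to carry the required structure pointwise (prime filters of $X$ form a preframe, Scott-open filters of $L$ form a convex set via~\eqref{MSLsemimoduleEqn}), and the adjunction correspondence is obtained by swapping/uncurrying arguments, which your ``bimorphism'' formulation merely makes explicit. One small correction: the key fact that a nonempty finite pointwise meet of Scott-open filters still preserves directed joins does not follow from the distributivity axiom of $L$, but from directedness together with monotonicity of the $f_i$ --- given indices $j_i$ with $f_i(x_{j_i})=1$ for each $i$, one picks a common upper bound $x_j$ of the finitely many $x_{j_i}$, which is exactly the argument the paper spells out.
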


\begin{proof}
For a preframe $L$ the homset $\Hom(L,\{0,1\})$ of Scott-open filters
is closed under finite intersections: if $\dirjoin_{i}x_{i} \in
U_{1}\cap \cdots\cap U_{m}$, then for each $j\leq m$ there is an
$i_{j}$ with $x_{j}\in U_{i_j}$. By directedness there is an $i$ with
$x_{i} \geq x_{i_j}$ for each $j$, so that $x_{i}$ is in each $U_{j}$.
Hence, $\Hom(L,\{0,1\})$ carries a $\Dstr_{S}$-algebra
structure as in~\eqref{MSLsemimoduleEqn}. We shall write it
as $\beta\colon \Dstr_{S}(\Hom(L,\{0,1\})) \rightarrow \Hom(L,\{0,1\})$.

For an $S$-convex set $X$ we need to construct a bijective
correspondence:
$$\begin{prooftree}
{\xymatrix{X \ar[r]^-{f} & \Hom(L,\{0,1\})}}
   \rlap{\qquad in $\Cat{Conv}_{S}$} 
\Justifies
{\xymatrix{L \ar[r]_-{g} & \Hom(X,\{0,1\})}}
   \rlap{\qquad in \Cat{PreFrm}}
\end{prooftree}$$

\noindent The correspondence between these $f$ and $g$ is given in
the usual way by swapping arguments. \QED

\auxproof{
Given $f$ we obtain $\overline{f}\colon L\rightarrow
\Hom(X,\{0,1\})$ by $\overline{f}(a)(x) = f(x)(a)$. We need to check that
it is well-defined. First, for each $a\in L$, $\overline{f}(a)$ is a
homomorphism of algebras/convex sets $X\rightarrow \{0,1\}$, since:
$$\begin{array}{rcl}
\overline{f}(a)(\alpha(\sum_{i}s_{i}x_{i}))
\hspace*{\arraycolsep}=\hspace*{\arraycolsep}
f(\alpha(\sum_{i}s_{i}x_{i}))(a) 
& = &
\beta(\Dstr_{S}(f)(\sum_{i}s_{i}x_{i}))(a) \\
& = &
(\bigwedge_{i}f(x_{i}))(a) \\
& = &
\bigwedge_{i}f(x_{i})(a) \\
& = &
\bigwedge_{i}\overline{f}(a)(x_{i}) \\
& = &
\sum_{i}s_{i}(\overline{f}(a)(x_{i})).
\end{array}$$

\noindent Next, $\overline{f}$ must preserve finite meets and
directed joins. This is easy by pointwise reasoning:
$$\textstyle\overline{f}(\bigwedge_{i} a_{i})(x)
=
f(x)(\bigwedge_{i}a_{i})
=
\bigwedge_{i}f(x)(a_{i}) 
=
\bigwedge_{i}\overline{f}(a_{i})(x) 
=
(\bigwedge_{i}\overline{f}(a_{i}))(x).$$

In the reverse direction $g\colon L\rightarrow \Hom(X,\{0,1\})$ yields
$\overline{g}\colon X\rightarrow \Hom(L,\{0,1\})$ by twisting
arguments, as in $\overline{g}(x)(a) = g(a)(x)$. This $\overline{g}(x)$
is a homomorphism of preframes, since:
$$\textstyle\overline{g}(x)(\bigwedge_{i}a_{i})
=
g(\bigwedge_{i}a_{i})(x)
=
(\bigwedge_{i}g(a_{i}))(x)
=
\bigwedge_{i}g(a_{i})(x) 
=
\bigwedge_{i}\overline{g}(x)(a_{i}).$$

\noindent And $\overline{g}$ is a map of algebras/convex sets:
$$\begin{array}{rcl}
\overline{g}(\alpha(\sum_{i}s_{i}x_{i}))
\hspace*{\arraycolsep}=\hspace*{\arraycolsep}
\lam{a}{\overline{g}(\alpha(\sum_{i}s_{i}x_{i}))(a)} 
& = &
\lam{a}{g(a)(\alpha(\sum_{i}s_{i}x_{i}))} \\
& = &
\lam{a}{\sum_{i}s_{i}g(a)(x_{i})} \\
& = &
\lam{a}{\bigwedge_{i}g(a)(x_{i})} \\
& = &
\lam{a}{\bigwedge_{i}\overline{g}(x_{i})(a)} \\
& = &
\lam{a}{(\bigwedge_{i}\overline{g}(x_{i}))(a)} \\
& = &
\bigwedge_{i}\overline{g}(x_{i}) \\
& = &
\sum_{i}s_{i}\overline{g}(x_{i}).
\end{array}$$
}
\end{proof}

Homomorphisms from convex sets to the set of Boolean values $\{0,1\}$
capture only a part of what is going on. Richer structures arise via
homomorphisms to the unit interval $\unitR$. They give rise to effect
algebras, instead of preframes, as will be shown in the next two
sections.

\section{Effect algebras}\label{EffAlgSec}

This section recalls the basic definition, examples and result of
effect algebras. To start, we need the notion of partial commutative
monoid. It consists of a set $M$ with a zero element $0\in M$ and a
partial binary operation $\ovee\colon M\times M\rightarrow M$
satisfying the three requirements below---involving the notation
$x\orthogonal y$ for: $x\ovee y$ is defined.
\begin{enumerate}
\item Commutativity: $x\orthogonal y$ implies $y\orthogonal x$ and
$x\ovee y = y\ovee x$;

\item Associativity: $y\orthogonal z$ and $x \orthogonal (y\ovee z)$
implies $x\orthogonal y$ and $(x\ovee y) \orthogonal z$ and also
$x \ovee (y\ovee z) = (x\ovee y)\ovee z$;

\item Zero: $0\orthogonal x$ and $0\ovee x = x$;
\end{enumerate}

An example of a partially commutative monoid is the unit interval
$\unitR$ of real numbers, where $\ovee$ is the partially defined sum
$+$.  The notation $\ovee$ for the sum might suggest a join, but this
is not intended, as the example $\unitR$ shows. We wish to avoid the
notation $\oplus$ (and its dual $\otimes$) that is more common in the
context of effect algebras because we like to reserve these operations
$\oplus, \otimes$ for tensors on categories.

As an aside, for the more categorically minded, a partial commutative
monoid may also be understood as a monoid in the category
$\Sets_\bullet$ of pointed sets (or sets and partial
functions). However, morphisms of partially commutative monoids are
mostly total maps.

\auxproof{
It then consists of a pointed set $M\in\Sets_\bullet$
with two morphisms $2 \stackrel{e}{\rightarrow} M
\stackrel{m}{\leftarrow} M\otimes M$ in the category $\Sets_{\bullet}$
of pointed sets making the monoid diagrams commute. In particular, 
the composite
$$\xymatrix{
M \ar[r]^-{\cong} & 2\otimes M \ar[r]^-{e\otimes\idmap} & 
   M\otimes M\ar[r]^-{m} & M
}$$

\noindent is the identity $M\rightarrow M$. This means that $e(1)\neq
0_{M}$, where $1\in 2 = \{0,1\}$.
}

The notion of effect algebra is due to~\cite{FoulisB94}, see
also~\cite{DvurecenskijP00} for an overview.

\begin{definition}
\label{EffAlgDef}
An effect algebra is a partial commutative monoid $(E, 0, \ovee)$ with
an orthosupplement. The latter is a unary operation $(-)^{\perp}
\colon E\rightarrow E$ satisfying:
\begin{enumerate}
\item $x^{\perp}\in E$ is the unique element in $E$ with $x\ovee
  x^{\perp} = 1$, where $1 = 0^\perp$;

\item $x\orthogonal 1 \Rightarrow x=0$.
\end{enumerate}


\end{definition}

When writing $x\ovee y$ we shall implicitly assume that $x\ovee y$ is
defined, \textit{i.e.} that $x\orthogonal y$ holds. 

\begin{example}
\label{EffAlgEx}
We briefly discuss several classes of examples.  {\em(1)}~A singleton
set forms an example of a degenerate effect algebra, with $0=1$. A two
element set $2 = \{0,1\}$ is also an example.

{\em(2)}~A more interesting example is the unit interval $\unitR
\subseteq \mathbb{R}$ of real numbers, with $r^{\perp} = 1-r$ and
$r\ovee s$ is defined as $r+s$ in case this sum is in $\unitR$. In
fact, for each positive number $M\in\mathbb{R}$ the interval
$[0,M]_{\mathbb{R}} = \setin{r}{\mathbb{R}}{0\leq r \leq M}$ is an
example of an effect algebra, with $r^{\perp} = M-r$.

Also the interval $[0,M]_{\mathbb{Q}} = \setin{q}{\mathbb{Q}}{0 \leq q
  \leq M}$ of rational numbers, for positive $M\in\mathbb{Q}$, is an
effect algebra. And so is the interval $[0,M]_{\NNO}$ of natural
numbers, for $M\in\NNO$.

The general situation involves so-called ``interval effect algebras'',
see \textit{e.g.}~\cite{FoulisGB98} or~\cite[1.4]{DvurecenskijP00}.  An
Abelian group $(G,0,-, +)$ is called ordered if it carries a partial
order $\leq$ such that $a\leq b$ implies $a+c \leq b+c$, for all
$a,b,c\in G$. A positive point is an element $p\in G$ with $p\geq
0$. For such a point we write $[0,p]_{G}\subseteq G$ for the
``interval'' $[0,p] = \setin{a}{G}{0 \leq a \leq p}$. It forms an
effect algebra with $p$ as top, orthosupplement $a^{\perp} = p-a$, and
sum $a+b$, which is considered to be defined in case $a+b\leq p$. 

\auxproof{
In an ordered Abelian group,
\begin{enumerate}
\item $a\leq b$ implies $-b \leq -a$; 

\item $a\leq b$ iff $\ex{c\geq 0}{a+c = b}$;
\end{enumerate}

The first point is easy: if $a\leq b$, then $-b = a + (-a-b) \leq b +
(-a-b) = -a$. For the second point, notice that if $a \leq b$, then $b
= a + c$ for $c = (b-a)$, where $0 = a-a \leq b-a = c$. Conversely, if
$a+c = b$ for $c\geq 0$, then $-c \leq -0 = 0$ by~(1), and thus $a =
-c + b \leq 0+ b = b$. 
}

{\em(3)}~A separate class of examples has a join as sum $\ovee$.
Let $(L, \vee, 0, (-)^{\perp})$ be an ortholattice: $\vee, 0$ are
finite joins and complementation $(-)^{\perp}$ satisfies $x\leq y
\Rightarrow y^{\perp} \leq x^{\perp}$, $x^{\perp\perp} = x$ and
$x\disjun x^{\perp} = 1 = 0^{\perp}$. This $L$ is called an
orthomodular lattice if $x \leq y$ implies $y = x \disjun (x^{\perp}
\conjun y)$. Such an orthomodular lattice forms an effect algebra in
which $x\ovee y$ is defined if and only if $x\orthogonal y$
(\textit{i.e.}~$x\leq y^{\perp}$, or equivalently, $y \leq
x^{\perp}$); and in that case $x\ovee y = x\disjun y$. This
restriction of $\vee$ is needed for the validity of requirements~(1)
and~(2) in Definition~\ref{EffAlgDef}: 
\begin{itemize}
\item suppose $x\ovee y = 1$, where $x\orthogonal y$,
  \textit{i.e.}~$x\leq y^{\perp}$. Then, by the orthomodularity
  property,
$$y^{\perp}
=
x \disjun (x^{\perp} \conjun y^{\perp})
=
x\disjun (x\disjun y)^{\perp}
=
x \disjun 1^{\perp}
=
x\disjun 0
=
x.$$

Hence $y = x^{\perp}$, making orthosupplements unique.

\item $x\orthogonal 1$ means $x \leq 1^{\perp} = 0$, so that $x=0$.
\end{itemize}

\noindent In particular, the lattice $\KSub(H)$ of closed subsets of a
Hilbert space $H$ is an orthomodular lattice and thus an effect
algebra. This applies more generally to the kernel subobjects
of an object in a dagger kernel category~\cite{HeunenJ10a}. These
kernels can also be described as self-adjoint endomaps below the
identity, see~\cite[Prop.~12]{HeunenJ10a}---in group-representation
style, like in the above point~2.

{\em(4)}~Since Boolean algebras are (distributive) orthomodular
lattices, they are also effect algebras. By distributivity, elements
in a Boolean algebra are orthogonal if and only if they are disjoint,
\textit{i.e.}~$x\orthogonal y$ iff $x\conjun y = 0$. In particular,
the Boolean algebra of measurable subsets of a measurable space forms
an effect algebra, where $U\ovee V$ is defined if $U\cap V =
\emptyset$, and is then equal to $U\cup V$.

\auxproof{
If $x\orthogonal y$, then $x\leq \neg y$, so that $x\conjun y \leq
\neg y \conjun y = 0$. Conversely, if $x\conjun y = 0$, then by
distributivity $x = x \conjun (y\disjun \neg y) = (x\conjun y)
\disjun (x\conjun \neg y) = x\conjun \neg y$. Hence $x\leq \neg y$.
}
\end{example}

An obvious next step is to organise effect algebras into a category
\Cat{EA}.

\begin{definition}
\label{EffAlgHomDef}
A homomorphism $E\rightarrow D$ of effect algebras is given by a function
$f\colon E\rightarrow D$ between the underlying sets satisfying:
\begin{itemize}
\item $x\orthogonal x'$ in $E$ implies both $f(x) \orthogonal f(x')$
  in $D$ and $f(x\ovee x') = f(x) \ovee f(x')$;

\item $f(1) = 1$.
\end{itemize}

\noindent Effect algebras and their homomorphisms form a category,
which we call \Cat{EA}.
\end{definition}

Homomorphisms are like measurable maps. Indeed, for the effect algebra
$\Sigma$ associated in Example~\ref{EffAlgEx}~(4) with a measureable
space $(X,\Sigma)$, effect algebra homomorphisms $f\colon \Sigma
\rightarrow \unitR$ satisfy $f(U\cup V) = f(U)+f(V)$ in case $U,V$ are
disjoint---because then $U\ovee V$ is defined and equals $U\cup V$. In
general, effect algebra homomorphisms $E\rightarrow \unitR$ to the
unit interval are often called states. They form a convex subset, see
Section~\ref{EffAlgConvexSec}.

Homomorphisms of effect algebras preserve all the relevant structure.



\begin{lemma}
\label{EffAlgHomLem}
Let $f\colon E\rightarrow D$ be a homomorphism of effect algebras. Then:
$$f(x^{\perp}) = f(x)^{\perp}
\quad\mbox{and thus}\quad
f(0) = 0.$$


\end{lemma}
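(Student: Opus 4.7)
The plan is to derive both identities directly from the uniqueness clause in the definition of the orthosupplement (Definition~\ref{EffAlgDef}(1)) together with the preservation properties of homomorphisms from Definition~\ref{EffAlgHomDef}. The only subtle point is confirming that the relevant sums are defined so that one is entitled to apply the homomorphism property; this is immediate because $x \ovee x^{\perp} = 1$ is defined by the very definition of $x^{\perp}$.

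First I would show $f(x^{\perp}) = f(x)^{\perp}$. Since $x \ovee x^{\perp}$ is defined in $E$, we have $x \orthogonal x^{\perp}$, so Definition~\ref{EffAlgHomDef} yields $f(x) \orthogonal f(x^{\perp})$ together with
$$f(x) \ovee f(x^{\perp}) \;=\; f(x \ovee x^{\perp}) \;=\; f(1) \;=\; 1.$$
By the uniqueness part of Definition~\ref{EffAlgDef}(1), the element $f(x)^{\perp}$ is the \emph{unique} element $y \in D$ with $f(x) \ovee y = 1$, so $f(x^{\perp}) = f(x)^{\perp}$.

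Then $f(0) = 0$ follows as an immediate corollary by instantiating the identity just proved at $x = 1$ and using $1^{\perp} = 0$ (which is the definition of $1$) together with $f(1) = 1$:
$$f(0) \;=\; f(1^{\perp}) \;=\; f(1)^{\perp} \;=\; 1^{\perp} \;=\; 0.$$

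There is no real obstacle here; the argument is a two-line application of uniqueness of orthosupplements. The only thing to be careful about is \emph{not} trying to prove $f(0) = 0$ first by some separate route (e.g.\ from $0 \ovee 0 = 0$, which would require a cancellation argument), since deriving it from the $(-)^{\perp}$-preservation identity is cleaner and is exactly how the two claims are naturally linked.
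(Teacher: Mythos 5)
Your proof is correct and is essentially identical to the paper's: both derive $f(x)\ovee f(x^{\perp}) = f(x\ovee x^{\perp}) = f(1) = 1$ and invoke uniqueness of orthosupplements, then obtain $f(0)=0$ by instantiating at $x=1$. No issues.
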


\begin{proof}
From $1 = f(1) = f(x \ovee x^{\perp}) = f(x) \ovee f(x^{\perp})$ we obtain
$f(x^{\perp}) = f(x)^{\perp}$ by uniqueness of orthosupplements. In
particular, $f(0) = f(1^{\perp}) = f(1)^{\perp} = 1^{\perp} = 0$. \QED
\end{proof}

\begin{example}
\label{EffAlgHomEx}
It is not hard to see that the one-element effect algebra $1$ is
final, and the two-element effect algebra $2$ is initial.

Orthosupplement $(-)^{\perp}$ is a homomorphism $E\rightarrow E\op$ in
\Cat{EA}, namely from $(E, 0, \ovee, (-)^{\perp})$ to $E\op = (E, 1,
\owedge, (-)^{\perp})$, where $x\owedge y = (x^{\perp}\ovee
y^{\perp})^{\perp}$.

An element (or point) $x\in E$ of an effect algebra $E$ can be
identified with a homomorphism $2\times 2\rightarrow E$ in $\Cat{EA}$, as in:
$$\xymatrix{
2\times 2 = \mathsf{MO}(2) = \ensuremath{\left(\xy
(0,4)*{1};
(-4,0)*{\bullet}; 
(4,1)*{\bullet\rlap{$^\perp$}};
(0,-4)*{0};
{\ar@{-} (-1,3); (-3,1)};
{\ar@{-} (1,3); (3,1)};
{\ar@{-} (-1,-3); (-3,-1)};
{\ar@{-} (1,-3); (3,-1)};
\endxy\;\;\right)}
\ar[rr]^-{x} & & E
}$$

On the homset $\Hom(E,D)$ of homomorphisms $E\rightarrow D$ in
\Cat{EA} one may define $(-)^{\perp}$ and $\ovee$ pointwise, as in
$f^{\perp}(x) = f(x)^{\perp}$. But this does not yield a homomorphism
$E\rightarrow D$, since for instance $f^{\perp}(1) = f(1)^{\perp} =
1^{\perp} = 0$. Hence these homsets do not form effect algebras.


\end{example}

\begin{example}
\label{RationalUnitStateEx}
Recall from Example~\ref{EffAlgEx}.(2) the effect algebra
$[0,1]_{\mathbb{Q}}$ given by the unit interval of rational numbers.
We claim that it has precisely one state: there is precisely one
morphism of effect algebras $f\colon [0,1]_{\mathbb{Q}} \rightarrow
\unitR$, namely the inclusion. To see this we first prove that
$f(\frac{1}{n}) = \frac{1}{n}$ for each positive $n\in\NNO$. Since the
$n$-fold sum $\frac{1}{n} + \cdots + \frac{1}{n}$ equals $1$ this
follows from:
$$\textstyle 1 
=
f(1)
=
f(\frac{1}{n} + \cdots + \frac{1}{n})
=
f(\frac{1}{n}) + \cdots + f(\frac{1}{n}).$$

\noindent Similarly we get $f(\frac{m}{n}) = \frac{m}{n}$, for $m\leq n$,
via an $m$-fold sum:
$$\textstyle f(\frac{m}{n})
=
f(\frac{1}{n} + \cdots + \frac{1}{n})
=
f(\frac{1}{n}) + \cdots + f(\frac{1}{n})
=
\frac{1}{n} + \cdots + \frac{1}{n} 
=
\frac{m}{n}.$$
\end{example}

We briefly mention some basic structure in the category of effect
algebras.

\begin{proposition}
\label{EffAlgCompletenessProp}
The category \Cat{EA} of effect algebras is complete, where products and
equalisers are constructed as in $\Sets$ and equipped with the appropriate
effect algebra structure.

The category \Cat{EA} also has set-indexed coproducts, given by
identifying top and bottom elements, as in: the coproduct $E+D =
\big((E - \{0,1\}) + (D-\{0,1\})\big) + \{0,1\}$, where $+$ on the
right-hand-side of the equality is disjoint union (or coproduct) of
sets. \QED
\end{proposition}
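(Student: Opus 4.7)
The plan for completeness is to build products and equalisers directly and transport the effect algebra structure through the underlying-set forgetful functor. For a family $\{E_i\}_{i \in I}$, take $\prod_i E_i$ to be the Cartesian product with componentwise $0$, $1$ and orthosupplement, and with $(x_i)_i \ovee (y_i)_i$ declared defined exactly when $x_i \orthogonal y_i$ for every $i$, in which case the sum is $(x_i \ovee y_i)_i$. Each of the three partial commutative monoid axioms and the two orthosupplement axioms is universally quantified, hence holds componentwise, hence in the product. The projections are trivially homomorphisms, and so is the tuple map induced by any cone. For an equaliser of $f, g : E \rightrightarrows D$, take $\{x \in E : f(x) = g(x)\}$ with inherited operations: Lemma~\ref{EffAlgHomLem} places $0$ and $1$ in this subset, and closure under $\ovee$ and $(-)^{\perp}$ follows because $f$ and $g$ preserve those operations. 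The existence of products and equalisers then yields completeness by the standard construction of arbitrary limits.

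For the coproduct $E + D$, my plan is to equip the described set with partial operations and verify the universal property. Write $q_E : E \to E+D$ and $q_D : D \to E+D$ for the insertions, both of which collapse the respective $0$'s and $1$'s into the common elements $0, 1$. Define $u \ovee v$ to equal $u$ when $v = 0$ (and dually), and for $u, v$ both non-zero declare it defined iff both lie in a single component, in which case take the sum in $E$ or $D$ accordingly; define $(-)^{\perp}$ to swap $0$ and $1$ and act componentwise otherwise. Verifying the partial commutative monoid and orthosupplement axioms is then a routine case analysis. The crucial observation is that non-zero elements from distinct components are never orthogonal, so the uniqueness of orthosupplement for a non-trivial $q_{E}(x)$ reduces to uniqueness in $E$: any $v$ with $q_{E}(x) \ovee v = 1$ must share a component with $q_{E}(x)$.

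For the universal property, given homomorphisms $f : E \to F$ and $g : D \to F$, Lemma~\ref{EffAlgHomLem} guarantees $f(0) = g(0) = 0$ and $f(1) = g(1) = 1$, so the cotuple $[f, g] : E + D \to F$ is unambiguously defined as a function. It preserves $\ovee$ because orthogonal pairs in $E + D$ either involve the additive unit or live entirely within one component, so the check reduces to the homomorphism property of $f$ or of $g$ separately. The set-indexed generalisation is identical: merge all the zeros and all the units of the summands into a single shared $\{0, 1\}$ and keep the non-trivial parts disjoint. The main obstacle is purely bookkeeping, namely formulating the case split on the shared elements cleanly; once one notes that any orthogonal pair with a non-trivial summand sits in a single component, associativity, commutativity and the orthosupplement axioms propagate from $E$ and $D$ without further work.
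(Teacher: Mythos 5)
Your proposal is correct and follows exactly the constructions the paper asserts without proof: componentwise products, equalisers as sub-effect-algebras (with closure under $0$, $1$, $\ovee$ and $(-)^{\perp}$ justified via Lemma~\ref{EffAlgHomLem}), and the glued coproduct whose one genuinely non-routine point---that an orthogonal pair of non-trivial elements must lie in a single summand, so associativity and uniqueness of orthosupplements reduce to the summands---you identify and use correctly. The only caveat, inherited from the statement itself rather than introduced by you, is that the displayed coproduct formula tacitly assumes each summand is non-degenerate (i.e.\ $0\neq 1$ in it), since a degenerate summand forces $0=1$ in the coproduct.
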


Coequalisers in \Cat{EA} are more complicated, but are not needed
here.

\section{Effect algebras and convex sets}\label{EffAlgConvexSec}

Our aim in this section is to establish the dual adjunction between
convex sets and effect algebras on the right in the
diagram~\eqref{DualAdjsDiag} in the introduction.  From now on we
restrict ourselves to the semiring $\nonnegR$ of non-negative real
numbers.  As before, we omit it as subscript and write $\Dstr$ for
$\Dstr_{\nonnegR}$ and $\Cat{Conv}$ for $\Cat{Conv}_{\nonnegR} =
\Alg(\Dstr_{\nonnegR})$.

We already mentioned that the unit interval $\unitR$ of real numbers
is a convex set. The set of states of an effect algebra is also
convex, as noticed for instance in~\cite{FoulisGB98}.

\begin{lemma}
\label{EffAlgToConvLem}
The state functor $\mathcal{S} = \Hom(-, \unitR) \colon
\Cat{EA} \rightarrow \Sets\op$ restricts to $\Cat{EA} \rightarrow
\Cat{Conv}\op$.
\end{lemma}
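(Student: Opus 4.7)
The plan is to use Theorem~\ref{ConvexSetAlgThm}, so it suffices to equip $\Hom(E,\unitR)$ with a $\Dstr$-algebra structure in a way that is natural in $E$ (contravariantly).

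First I would define, for an effect algebra $E$ and a formal convex combination $\sum_{i} r_{i}s_{i} \in \Dstr(\Hom(E,\unitR))$ of states $s_{i}\colon E\to\unitR$, the pointwise combination
\[
\alpha_{E}\bigl(\sum_{i} r_{i}s_{i}\bigr)(x) \;=\; \sum_{i} r_{i}\cdot s_{i}(x),
\]
where the right-hand side is ordinary sum in $\unitR$. The first step is to check that this pointwise combination is again a state, which is where the effect algebra axioms come in. For preservation of $1$, note $\sum_{i} r_{i}\cdot s_{i}(1) = \sum_{i} r_{i}\cdot 1 = 1$. For compatibility with $\ovee$, if $x\orthogonal y$ in $E$ then each $s_{i}(x)+s_{i}(y) = s_{i}(x\ovee y)\le 1$, so the combined value $\sum_{i}r_{i}(s_{i}(x)+s_{i}(y))$ lies in $\unitR$ and equals $\alpha_{E}(\sum_{i}r_{i}s_{i})(x) + \alpha_{E}(\sum_{i}r_{i}s_{i})(y)$, which gives the required equation $\alpha_{E}(\sum_{i}r_{i}s_{i})(x\ovee y) = \alpha_{E}(\sum_{i}r_{i}s_{i})(x)\ovee \alpha_{E}(\sum_{i}r_{i}s_{i})(y)$.

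Next I would verify the two algebra axioms from Definition~\ref{MonadAlgDef}. The unit axiom $\alpha_{E}\after\eta = \idmap$ is immediate: $\alpha_{E}(1\cdot s)(x) = 1\cdot s(x) = s(x)$. The multiplication axiom $\alpha_{E}\after\mu = \alpha_{E}\after\Dstr(\alpha_{E})$ reduces, after unfolding, to the identity $\sum_{i}r_{i}\bigl(\sum_{j}s_{ij}\cdot t_{ij}(x)\bigr) = \sum_{i,j}(r_{i}s_{ij})\cdot t_{ij}(x)$, which is just distributivity and associativity in $\nonnegR$. Thus $\alpha_{E}$ makes $\Hom(E,\unitR)$ a $\Dstr$-algebra, hence a convex set.

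Finally, for functoriality I would show that for a homomorphism $f\colon E\to D$ in $\Cat{EA}$ the pullback map $f^{*} = (-)\after f \colon \Hom(D,\unitR)\to\Hom(E,\unitR)$ is a morphism of $\Dstr$-algebras. This is a routine pointwise verification: for each $x\in E$,
\[
\bigl(f^{*}\after\alpha_{D}\bigr)\bigl(\sum_{i}r_{i}t_{i}\bigr)(x)
= \sum_{i}r_{i}\cdot t_{i}(f(x))
= \bigl(\alpha_{E}\after \Dstr(f^{*})\bigr)\bigl(\sum_{i}r_{i}t_{i}\bigr)(x).
\]
I expect no real obstacle: the only spot that needs care is checking that the pointwise sum of states really lands in $\unitR$ and preserves $\ovee$, which hinges on the fact that $r_{1}+\cdots+r_{n}=1$ so that convex combinations of values in $\unitR$ stay in $\unitR$.
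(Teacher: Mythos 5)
Your proposal is correct and takes essentially the same approach as the paper: define the convex structure on $\Hom(E,\unitR)$ by pointwise convex combination, check that such a combination is again a state (preserving $1$ and $\ovee$), and verify that precomposition with an effect-algebra homomorphism is affine. The only difference is that you additionally spell out the two $\Dstr$-algebra axioms explicitly, which the paper leaves implicit; this is harmless extra detail, not a different argument.
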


\begin{proof}
Let $E$ be an effect algebra with states $f_{i}\colon E\rightarrow
\unitR$ and $r_{i}\in \unitR$ with $\sum_{i}r_{i} = 1$, then we can
form a new state $f = r_{1}f_{1} + \cdots + r_{n}f_{n}$ by $f(x) =
\sum_{i}r_{i}\cdot f_{i}(x)$, using multiplication $\cdot$ in
$\unitR$. This yields a homomorphism of effect algebras $E\rightarrow
\unitR$, since:
\begin{itemize}
\item $f(1) = \sum_{i}r_{i}\cdot f_{i}(1) = \sum_{i}r_{i}\cdot 1 = 
\sum_{i}r_{i} = 1$;

\item if $x\orthogonal x'$ in $E$, then in $\unitR$:
$$\begin{array}{rcl}
f(x\ovee x')
\hspace*{\arraycolsep}=\hspace*{\arraycolsep}
\sum_{i}r_{i}\cdot f_{i}(x\ovee x')
& = &
\sum_{i}r_{i}\cdot (f_{i}(x) + f_{i}(x')) \\
& = &
\sum_{i}r_{i}\cdot f_{i}(x) + r_{i}\cdot f_{i}(x') \\
& = &
\sum_{i}r_{i}\cdot f_{i}(x) + \sum_{i}r_{i}\cdot f_{i}(x') \\
& = &
f(x) + f(x').
\end{array}$$
\end{itemize}

\noindent Further, for a map of effect algebras $g\colon E\rightarrow
D$ the induced function $\mathcal{S}(g) = (-) \after g \colon
\Hom(D,\unitR) \rightarrow \Hom(E, \unitR)$ is a map of convex sets:
$$\begin{array}[b]{rcl}
\mathcal{S}(g)(\sum_{i}r_{i}f_{i})
& = &
\lam{x}{(\sum_{i}r_{i}f_{i})(g(x))} \\
& = &
\lam{x}{\sum_{i}r_{i}\cdot f_{i}(g(x))} \\
& = &
\lam{x}{\sum_{i}r_{i}\cdot \mathcal{S}(g)(f_{i})(x)} \\
& = &
\sum_{i}r_{i}(\mathcal{S}(g)(f_{i})).
\end{array}\eqno{\QEDbox}$$
\end{proof}

Interestingly, there is also a Hom functor in the other direction.

\begin{lemma}
\label{ConvToEffAlgLem}
For each convex set $X$ the homset $\Hom(X,\unitR)$ of homomorphisms
of convex sets is an effect algebra. In this way one gets a functor
$\Hom(-,\unitR) \colon \Cat{Conv}\op \rightarrow \Cat{EA}$.
\end{lemma}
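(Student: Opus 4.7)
The plan is to lift all the effect algebra structure from $\unitR$ to $\Hom(X,\unitR)$ pointwise, using that $\unitR$ is itself an effect algebra (Example~\ref{EffAlgEx}.(2)). Concretely, for $f,g\in\Hom(X,\unitR)$ I would declare $f\orthogonal g$ to mean $f(x)+g(x)\leq 1$ for every $x\in X$, and then define $(f\ovee g)(x)=f(x)+g(x)$ and $f^{\perp}(x)=1-f(x)$. The constants $0$ and $1$ serve as the bottom and top elements.

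The first thing that needs checking is that these pointwise definitions actually stay inside $\Hom(X,\unitR)$, i.e.~that they preserve convex combinations. Writing $\alpha\colon\Dstr(X)\to X$ for the algebra on $X$ and recalling that the convex structure on $\unitR$ is the usual weighted sum, for $f\orthogonal g$ in $\Hom(X,\unitR)$ and a convex sum $\sum_{i}r_{i}x_{i}\in\Dstr(X)$,
$$
(f\ovee g)(\alpha(\textstyle\sum_{i}r_{i}x_{i}))
= \sum_{i}r_{i}f(x_{i})+\sum_{i}r_{i}g(x_{i})
= \sum_{i}r_{i}(f\ovee g)(x_{i}),
$$
and using $\sum_{i}r_{i}=1$,
$$
f^{\perp}(\alpha(\textstyle\sum_{i}r_{i}x_{i}))
= 1-\sum_{i}r_{i}f(x_{i})
= \sum_{i}r_{i}(1-f(x_{i}))
= \sum_{i}r_{i}f^{\perp}(x_{i}).
$$
The constants $0,1$ are trivially convex.

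With that in place, the effect algebra axioms transfer from $\unitR$ by pointwise reasoning: commutativity, associativity and the zero law are immediate; $f\ovee f^{\perp}=1$ is clear and uniqueness of the orthosupplement follows because if $f\ovee g=1$ then $g(x)=1-f(x)=f^{\perp}(x)$ at every point $x$; and $f\orthogonal 1$ forces $f(x)\leq 0$ pointwise, whence $f=0$. Thus $\Hom(X,\unitR)$ is an effect algebra.

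For functoriality, given $h\colon X\to Y$ in $\Cat{Conv}$, I would take $\Hom(h,\unitR)\colon\Hom(Y,\unitR)\to\Hom(X,\unitR)$ to be precomposition $f\mapsto f\after h$. Its preservation of $\ovee$, $(-)^{\perp}$ and $1$ is an immediate pointwise computation, e.g.~$(f\after h)^{\perp}(x)=1-f(h(x))=f^{\perp}(h(x))$. There is no real obstacle here: the only non-bookkeeping observation is the compatibility of pointwise sum and complement with convex combinations displayed above, and this works precisely because $\unitR$'s convex structure is ordinary weighted summation.
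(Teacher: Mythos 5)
Your proposal is correct and follows essentially the same route as the paper: pointwise partial sum with $f\orthogonal g$ meaning $f(x)+g(x)\leq 1$ for all $x$, pointwise orthosupplement $f^{\perp}(x)=1-f(x)$, a check that these operations land back in $\Hom(X,\unitR)$ (your $f^{\perp}$ computation is the one the paper displays), and functoriality by precomposition. You are in fact slightly more explicit than the paper about verifying the effect algebra axioms themselves (uniqueness of the orthosupplement and $f\orthogonal 1\Rightarrow f=0$), which the paper leaves implicit.
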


\begin{proof}
Let $X$ be a convex set. We have to define effect algebra structure on
the homset $\Hom(X,\unitR)$. There is an obvious zero element, namely
the zero function $\lam{x}{0}$. A partial sum $f+f'$ is defined as
$(f+f')(x) = f(x) + f'(x)$, provided the sum $f(x)+f'(x)\leq 1$ for
all $x\in X$. It is easy to see that this $f+f'$ is again a map of
convex sets. Similarly, one defines $f^{\perp} = \lam{x}{1-f(x)}$, which
is again a homomorphism since:
$$\begin{array}{rcl}
f^{\perp}(r_{1}x_{1} + \cdots + r_{n}x_{n})
& = &
1 - f(r_{1}x_{1} + \cdots + r_{n}x_{n}) \\
& = &
(r_{1} + \cdots + r_{n}) - (r_{1}\cdot f(x_{1}) + \cdots + r_{n}\cdot f(x_{n})) \\
& = &
r_{1}\cdot (1-f(x_{1})) + \cdots + r_{n}\cdot (1-f(x_{n})) \\
& = &
r_{1}\cdot f^{\perp}(x_{1}) + \cdots + r_{n}\cdot f^{\perp}(x_{n}).
\end{array}$$

\auxproof{
$$\begin{array}{rcl}
\lefteqn{(f+f')(r_{1}x_{1} + \cdots + r_{n}x_{n})} \\
& = &
f(r_{1}x_{1} + \cdots + r_{n}x_{n}) + f'(r_{1}x_{1} + \cdots + r_{n}x_{n}) \\
& = &
r_{1}\cdot f(x_{1}) + \cdots + r_{n}\cdot f(x_{n}) + 
   r_{1}\cdot f'(x_{1}) + \cdots + r_{n}\cdot f'(x_{n}) \\
& = &
r_{1}\cdot (f(x_{1}) + f'(x_{1})) + \cdots + r_{n}\cdot (f(x_{n}) + f'(x_{n})) \\
& = &
r_{1}\cdot (f+f')(x_{1}) + \cdots + r_{n}\cdot (f+f')(x_{n})
\end{array}$$
}

\noindent Functoriality is easy: for a map $g\colon X\rightarrow Y$ of
convex sets we obtain a map of effect algebras 
$(-)\after g\colon \Hom(Y,\unitR) \rightarrow \Hom(X,\unitR)$ since:
\begin{itemize}
\item $1 \after g = \lam{x}{1(g(x))} = \lam{x}{1} = 1$;

\item $(f+f') \after g = \lam{x}{(f+f')(g(x))} = \lam{x}{f(g(x)) + f'(g(x))}
= \lam{x}{(f \after g)(x) + (f'\after g)(x)} = (f\after g) + (f'\after g)$.
\QED
\end{itemize}
\end{proof}

The next result is now an easy combination of the previous two lemmas.

\begin{theorem}
\label{ConvEffAlgDualAdjThm}
There is a dual adjunction between convex sets and effect algebras:
$$\xymatrix{
\Cat{Conv}\op \ar@/_2ex/ [rr]_{\Hom(-,\unitR)} & \bot & 
   \Cat{EA}\ar@/_2ex/[ll]_{\mathcal{S} = \Hom(-,\unitR)}
}$$
\end{theorem}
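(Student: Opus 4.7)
The plan is to exhibit the adjunction directly by constructing a natural bijection
$$\frac{X \longrightarrow \Hom(E,\unitR) \quad \text{in } \Cat{Conv}}
       {E \longrightarrow \Hom(X,\unitR) \quad \text{in } \Cat{EA}}$$
for each convex set $X$ and each effect algebra $E$, obtained by the familiar argument-swapping trick: given $f$ going down, define $\overline{f}(a)(x) = f(x)(a)$, and similarly in the other direction. This is exactly the pattern used in Theorem~\ref{ConvPreFrmDualAdjThm}, except that the dualising object is now $\unitR$ carrying two compatible structures at once (a convex set, by Lemma~\ref{EffAlgToConvLem}'s underlying observation, and an effect algebra, by Example~\ref{EffAlgEx}(2)).

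First I would verify well-definedness of $\overline{f}$ pointwise. Fixing $a \in E$, the map $x \mapsto f(x)(a)$ is a composite $X \stackrel{f}{\to} \Hom(E,\unitR) \stackrel{\mathrm{ev}_a}{\to} \unitR$, so it is a map of convex sets provided $\mathrm{ev}_a$ is, which is immediate from the pointwise definition of the convex structure on $\Hom(E,\unitR)$ given in the proof of Lemma~\ref{EffAlgToConvLem}. Next, for each $x \in X$, the assignment $a \mapsto f(x)(a)$ is the evaluation $\mathrm{ev}_x \after f$ followed by nothing, hence is an effect algebra homomorphism $E \to \unitR$ because $f(x)$ already is. The essential check is that $\overline{f}\colon E \to \Hom(X,\unitR)$ preserves effect algebra structure: $\overline{f}(1)(x) = f(x)(1) = 1$, and if $a \orthogonal a'$ in $E$, then for every $x$ one has $\overline{f}(a\ovee a')(x) = f(x)(a\ovee a') = f(x)(a) + f(x)(a') = \overline{f}(a)(x) + \overline{f}(a')(x)$, which also shows $\overline{f}(a) \orthogonal \overline{f}(a')$ in the effect algebra $\Hom(X,\unitR)$ from Lemma~\ref{ConvToEffAlgLem}. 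The reverse transformation $g \mapsto \overline{g}$, with $\overline{g}(x)(a) = g(a)(x)$, is verified analogously, swapping the roles of the two structures.

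Bijectivity of the correspondence is immediate from $\overline{\overline{f}}(x)(a) = \overline{f}(a)(x) = f(x)(a)$, and naturality in $X$ and $E$ follows by another routine pointwise calculation, using that precomposition was shown to give morphisms in the appropriate categories in the two preceding lemmas.

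The only step requiring any real care is the verification that $\overline{f}$ preserves the partial sum $\ovee$ — one must check that orthogonality is preserved as well as the value of the sum — but this reduces to the elementary observation that orthogonality in the pointwise effect algebra $\Hom(X,\unitR)$ means pointwise orthogonality in $\unitR$, so that it follows immediately from $f(x)$ being an effect algebra homomorphism for each $x$. Everything else is bookkeeping of the argument-swapping kind already carried out in Theorem~\ref{ConvPreFrmDualAdjThm}. \QED
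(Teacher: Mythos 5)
Your proof is correct and takes essentially the same route as the paper: both rest on Lemmas~\ref{EffAlgToConvLem} and~\ref{ConvToEffAlgLem} together with the observation that the evaluation/argument-swapping maps $x\mapsto \lam{f}{f(x)}$ respect the pointwise convex and effect-algebra structures. The only difference is packaging---you exhibit the hom-set bijection directly, whereas the paper verifies that the unit and counit are morphisms in the appropriate categories---and these amount to the same computations.
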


\begin{proof}
We need to check that the unit and counit
$$\xymatrix@R-1.8pc{
E\ar[r]^-{\eta} & \Hom(\mathcal{S}(E), \unitR)
&
X\ar[r]^-{\varepsilon} & \mathcal{S}(\Hom(X,\unitR)) \\
x\;\ar@{|->}[r] & \lam{f}{f(x)}
&
x\;\ar@{|->}[r] & \lam{f}{f(x)}
}$$

\noindent are appropriate homomorphisms. First we check that $\eta$
is a map of effect algebras:
\begin{itemize}
\item $\eta(1) = \lam{f}{f(1)} = \lam{f}{1} = 1$;

\item and if $x\orthogonal x'$ in $E$, then:
$$\begin{array}{rcl}
\eta(x\ovee x')
\hspace*{\arraycolsep}=\hspace*{\arraycolsep}
\lam{f}{f(x\ovee x')}
& = &
\lam{f}{f(x) + f(x')} \\
& = &
\lam{f}{\eta(x)(f) + \eta(x')(f)} \\
& = &
\eta(x) + \eta(x').
\end{array}$$
\end{itemize}

\noindent Similarly $\varepsilon$ is a map of convex sets:
$$\begin{array}[b]{rcl}
\varepsilon(r_{1}x_{1} + \cdots + r_{n}x_{n})
& = &
\lam{f}{f(r_{1}x_{1} + \cdots + r_{n}x_{n})} \\
& = &
\lam{f}{r_{1}\cdot f(x_{1}) + \cdots + r_{n}\cdot f(x_{n})} \\
& = &
\lam{f}{r_{1}\cdot \varepsilon(x_{1})(f) + \cdots + 
   r_{n}\cdot \varepsilon(x_{n})(f)} \\
& = &
r_{1}\varepsilon(x_{1}) + \cdots + r_{n}\varepsilon(x_{n}).
\end{array}\eqno{\QEDbox}$$
\end{proof}

Recall that the forgetful functor $U\colon \Cat{Conv}
=\Alg(\Dstr)\rightarrow \Sets$ has a left adjoint, also written as
$\Dstr$. By taking opposites $\Dstr$ becomes a right adjoint to $U$,
so that we can compose adjoint, as in the following result.

\begin{proposition}
\label{ConvEffAlgDualAdjCompProp}
By composition of adjoints, as in:
$$\xymatrix@R+2pc{
\Cat{Conv}\op \ar@/_2ex/ [rr]_-{\Hom(-,\unitR)}\ar@/_2ex/[dr]_{U}^{\;\dashv}
   & \bot & 
   \Cat{EA}\ar@/_2ex/[ll]_-{\Hom(-,\unitR)}\ar@/_2ex/[dl]^{\;\dashv} \\
& \Sets\op\ar@/_2ex/[ul]_(0.3){\Dstr}\ar@/_2ex/[ur]_{(\unitR)^{(-)}}
}$$

\noindent one obtains in a standard way a dual adjunction between
effect algebras and sets.
\end{proposition}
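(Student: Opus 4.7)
The plan is to invoke the general categorical fact that adjunctions compose, using the two adjunctions that meet at $\Cat{Conv}\op$ in the diagram. The left edge of the triangle is the dual adjunction $\mathcal{S}\dashv\Hom(-,\unitR)$ between $\Cat{EA}$ and $\Cat{Conv}\op$ from Theorem~\ref{ConvEffAlgDualAdjThm}; that $\mathcal{S}$ is the left adjoint can be read off from the unit $E\to\Hom(\mathcal{S}(E),\unitR)$ in that proof. The right edge is the free-forgetful adjunction $\Dstr\dashv U$ between $\Sets$ and $\Cat{Conv}$, which upon reversing all arrows yields $U\dashv\Dstr$ between $\Cat{Conv}\op$ and $\Sets\op$, as noted in the paper. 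Both left adjoints $\mathcal{S}\colon\Cat{EA}\to\Cat{Conv}\op$ and $U\colon\Cat{Conv}\op\to\Sets\op$ can then be composed, and the standard ``adjoints compose'' lemma yields the dual adjunction $U\mathcal{S}\dashv\Hom(-,\unitR)\Dstr$ between $\Cat{EA}$ and $\Sets\op$.

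Next I would identify the two composite functors with those named in the diagram. The left composite $U\mathcal{S}\colon\Cat{EA}\to\Sets\op$ sends an effect algebra $E$ to the bare set of states $\Hom(E,\unitR)$, i.e.\ the state functor of Lemma~\ref{EffAlgToConvLem} with its convex structure forgotten. For the right composite $\Hom(\Dstr(-),\unitR)\colon\Sets\op\to\Cat{EA}$, the free-forgetful adjunction gives a natural bijection between convex maps $\Dstr(A)\to\unitR$ and plain functions $A\to\unitR$, hence a natural isomorphism $\Hom(\Dstr(A),\unitR)\cong\unitR^{A}$ of sets. Restricted along the unit $\eta_{A}\colon A\to\Dstr(A)$, $a\mapsto 1a$, the pointwise effect-algebra structure from Lemma~\ref{ConvToEffAlgLem} on the left transfers to the pointwise effect-algebra structure on $\unitR^{A}$, since both $\ovee$ and $(-)^{\perp}$ on $\Hom(\Dstr(A),\unitR)$ are computed pointwise. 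Thus the right composite is naturally isomorphic to the functor $(\unitR)^{(-)}$ shown in the diagram.

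There is no genuine obstacle here; the argument is pure bookkeeping. As a sanity check one may verify the composed hom-bijection directly: an $\Cat{EA}$-morphism $E\to\unitR^{A}$ is exactly an $A$-indexed family of states, i.e.\ a function $A\to\mathcal{S}(E)=U\mathcal{S}(E)$, which matches $\Sets\op(U\mathcal{S}(E),A)$. This factorisation agrees with first applying the bijection $\Cat{EA}(E,\Hom(\Dstr(A),\unitR))\cong\Cat{Conv}\op(\mathcal{S}(E),\Dstr(A))$ from Theorem~\ref{ConvEffAlgDualAdjThm} and then the bijection $\Cat{Conv}\op(\mathcal{S}(E),\Dstr(A))\cong\Sets\op(U\mathcal{S}(E),A)$ coming from $U\dashv\Dstr$, exhibiting the composite adjunction explicitly.
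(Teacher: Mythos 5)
Your proposal is correct and follows essentially the same route as the paper: compose the dual adjunction of Theorem~\ref{ConvEffAlgDualAdjThm} with the (opposite of the) free--forgetful adjunction $\Dstr\dashv U$, and identify the composite $\Hom_{\Cat{Conv}}(\Dstr(-),\unitR)$ with $(\unitR)^{(-)}$ via the hom-set bijection. The only cosmetic difference is that the paper justifies the effect-algebra structure on $\unitR^{X}$ by closure of $\Cat{EA}$ under products (Proposition~\ref{EffAlgCompletenessProp}) rather than by transporting the pointwise structure of Lemma~\ref{ConvToEffAlgLem}, which amounts to the same thing.
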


\begin{proof}
Because by the adjunction $\Dstr \dashv U$, for $X\in\Sets$, 
$$\Hom_{\Cat{Conv}}(\Dstr(X), \unitR)
\cong
\Hom_{\Sets}(X, U(\unitR))
\cong
\big(\unitR\big)^{X}$$

\noindent which, yields an effect algebra because by
Proposition~\ref{EffAlgCompletenessProp} effect algebras are closed
under products (and hence under powers). Its right adjoint is $E
\mapsto \Hom_{\Cat{EA}}(E, \unitR) = U\big(\Hom_{\Cat{EA}}(E,
\unitR)\big)$, where this last homset is considered as object of the
category \Cat{Conv}. \QED
\end{proof}

\section{Hilbert spaces}\label{HilbertSec}

In the end one may ask: how does the standard way of modeling quantum
phenomena in Hilbert spaces fit in the picture~\eqref{DualAdjsDiag}
provided by the dual adjunctions? The answer is: only partially. There
is a contravariant functor from Hilbert spaces to effect algebras,
mapping a Hilbert space to its orthomodular lattice (and hence effect
algebra) of closed subspaces. The unit ball $H_1$ in each Hilbert
space $H$---with $H_1$ consisting of points $a\in H$ with $\|a\|\leq
1$---is convex. However, this mapping $H\mapsto H_1$ is not functorial
in an obvious way. Nevertheless, each unit element does give rise to
a state, as described in the following lemma. It uses notation and
terminology from~\cite{HeunenJ10a,Heunen10a}.

\begin{lemma}
\label{HilbertCounitLem}
Let $H$ be a Hilbert space, with a unit element $a\in H$ (so
that $\|a\|=1$). It gives rise to a map of effect algebras:
$$\xymatrix{
\KSub(H)\ar[r]^-{\varepsilon(a)} & 
\unitR, \quad\mbox{namely}\quad
k\, \ar@{|->}[r] & \inprod{k^{\dag}(a)}{k^{\dag}(a)} = \|k^{\dag}(a)\|^{2}
}$$

\noindent where $\KSub(H)$ is the orthomodular lattice of
closed subspaces $k\colon K\rightarrowtail H$.
\end{lemma}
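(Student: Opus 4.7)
The plan is to unpack the meaning of $k^{\dag}$ for a kernel $k\colon K\rightarrowtail H$ in \Hilb and then verify the two effect-algebra axioms (preservation of $1$ and of defined sums $\ovee$) directly from Hilbert-space calculations, plus check that the output really lands in $\unitR$.

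First I would note that in $\Cat{DCK}$, a kernel $k\colon K \rightarrowtail H$ is an isometric inclusion, so its dagger $k^{\dag}\colon H\rightarrow K$ is the adjoint map, concretely the orthogonal projection onto $K$ post-composed with the identification with $K$. The composite $kk^{\dag}\colon H\rightarrow H$ is then the self-adjoint idempotent $p_{K}$ projecting onto $K$, and one has the standard identity $\|k^{\dag}(a)\|^{2} = \inprod{a}{kk^{\dag}(a)} = \inprod{a}{p_{K}(a)}$. From $\|k^{\dag}(a)\| \leq \|k\|\cdot\|a\| = 1$ (since $k^{\dag}$ is a contraction) the value $\|k^{\dag}(a)\|^{2}$ lies in $\unitR$, so $\varepsilon(a)$ is well-defined as a function $\KSub(H)\rightarrow\unitR$.

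Next I would verify preservation of the top element: the unit $1\in\KSub(H)$ is $\id_{H}\colon H\rightarrowtail H$, whose dagger is $\id_{H}$, hence $\varepsilon(a)(1) = \|a\|^{2} = 1$ by assumption. For preservation of the partial sum, I would take $k\colon K\rightarrowtail H$ and $k'\colon K'\rightarrowtail H$ with $k\orthogonal k'$ in $\KSub(H)$, which by the description of orthogonality in the kernel lattice means $K$ and $K'$ are orthogonal subspaces of $H$. Then $k\ovee k' = k\disjun k'$ is the kernel corresponding to the closed subspace $K\oplus K'$, and its projection decomposes as $p_{K\vee K'} = p_{K} + p_{K'}$ (the basic fact that orthogonal projections onto orthogonal subspaces are orthogonal idempotents whose sum is the projection onto the sum).

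Substituting into the identity above gives
\[
  \varepsilon(a)(k\ovee k')
  \;=\; \inprod{a}{p_{K\vee K'}(a)}
  \;=\; \inprod{a}{p_{K}(a)} + \inprod{a}{p_{K'}(a)}
  \;=\; \varepsilon(a)(k) + \varepsilon(a)(k'),
\]
which is the required additivity; in particular the right-hand side automatically lies in $\unitR$, reconfirming that $k\ovee k'$ is sent inside the unit interval. Only one mildly delicate point remains, namely to justify that the orthogonality relation in the abstract effect algebra $\KSub(H)$ (defined via $k\leq (k')^{\perp}$) really does coincide with geometric orthogonality of subspaces in $H$; this is standard for \Hilb (and is part of the kernel-subobject machinery in~\cite{HeunenJ10a}) and is the only ``abstract'' step in an otherwise elementary Hilbert-space argument, so I expect it to be the main place to be careful rather than an obstacle.
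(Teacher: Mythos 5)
Your proof is correct, and it reaches the same essential fact as the paper --- that the quadratic form $k\mapsto\|k^{\dag}(a)\|^{2}$ is additive over orthogonal decompositions --- but by a more concrete route. The paper stays inside the dagger-kernel-category formalism it cites: given $k\leq k'$ it writes $k'$ as a cotuple $\cotuple{k,m}\colon K\oplus M\rightarrowtail H$, computes $\cotuple{k,m}^{\dag}=\tuple{k^{\dag},m^{\dag}}$, and reads off $\varepsilon(a)(k')=\varepsilon(a)(k)+\varepsilon(a)(m)$ from the inner product on the biproduct; this gives monotonicity, from which the bound $\varepsilon(a)(k)\leq\varepsilon(a)(1)=1$ (and hence well-definedness into $\unitR$) follows, and the same cotuple computation applied to $k\orthogonal m$ gives additivity. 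You instead identify $kk^{\dag}$ with the orthogonal projection $p_{K}$, use $\|k^{\dag}(a)\|^{2}=\inprod{a}{p_{K}(a)}$, get the range bound directly from $\|k^{\dag}\|\leq 1$ (you wrote $\|k\|$, but the operator norms agree, so no harm), and use the classical identity $p_{K\vee K'}=p_{K}+p_{K'}$ for orthogonal closed subspaces. Your version is more elementary and self-contained for a reader fluent in Hilbert spaces; the paper's version is phrased so that it would transport to a general dagger kernel category. You are right to flag the translation between abstract orthogonality ($k\leq k'^{\perp}$ in $\KSub(H)$) and geometric orthogonality of subspaces as the one point needing care --- the paper handles exactly this by observing that $k\orthogonal m$ forces $k^{\dag}\after m=0$ and $m^{\dag}\after k=0$, making $\cotuple{k,m}$ a dagger mono representing the join --- so your argument has no gap.
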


\begin{proof}
  Clearly $\varepsilon(a)(k) \geq 0$. Further, $\varepsilon(a)$
  preserves the top element:
$$\varepsilon(a)(1)
=
\varepsilon(a)(\idmap[H])
=
\|\idmap^{\dag}(a)\|^{2}
=
\|\idmap(a)\|^{2}
=
\|a\|^{2}
= 
1^{2}
=
1.$$

\noindent We show that $\varepsilon(a)$ is monotone. Assume therefor
$k\leq k'$ in $\KSub(H)$. Then we can write $k'$ as cotuple
$\cotuple{k, m}\colon K\oplus M\rightarrowtail H$, where $K\oplus M =
K\times M$ describes the biproduct in \Hilb, so that:
$$\begin{array}{rcl}
\varepsilon(a)(k')
\hspace*{\arraycolsep}=\hspace*{\arraycolsep}
\varepsilon(a)([k,m])
& = &
\inprod{\cotuple{k, m}^{\dag}(a)}{\cotuple{k, m}^{\dag}(a)} \\
& = &
\inprod{\tuple{k^{\dag}, m^{\dag}}(a)}{\tuple{k^{\dag}, m^{\dag}}(a)} \\
& = &
\inprod{\tuple{k^{\dag}(a), m^{\dag}(a)}}{\tuple{k^{\dag}(a), m^{\dag}(a)}} \\
& = &
\inprod{k^{\dag}(a)}{k^{\dag}(a)} + \inprod{m^{\dag}(a)}{m^{\dag}(a)} \\
& = &
\varepsilon(a)(k) + \varepsilon(a)(m).
\end{array}$$

\noindent Hence $\varepsilon(a)(k) \leq \varepsilon(a)(k')$ in
$\mathbb{R}$.  In particular, since each $k\in\KSub(H)$ satisfies
$k\leq 1$ we get $\varepsilon(a)(k) \leq \varepsilon(a)(1) =
1$. Therefor $\varepsilon(a)(k) \in \unitR$.

Finally we show that $\varepsilon(a)$ is a map of effect algebras.
Assume $k\orthogonal m$ for $k,m\in\KSub(H)$. This means $k\leq
m^{\dag}$ so that $k^{\dag} \after m = 0$ and also $m^{\dag} \after k
= 0$. Hence the cotuple $\cotuple{k, m}$ is a dagger mono, and thus
the join $k\ovee m$. Then, like in the previous computation:
$$\varepsilon(a)(k\ovee m)
=
\varepsilon(a)(\cotuple{k, m})
=
\varepsilon(a)(k) + \varepsilon(a)(m).\eqno{\QEDbox}$$

\auxproof{
The cotuple $\cotuple{k, m}$ is indeed a dagger mono:
$$\begin{array}{rcl}
\cotuple{k, m}^{\dag} \after \cotuple{k, m}
& = &
\tuple{k^{\dag}, m^{\dag}} \after \cotuple{k, m} \\
& = &
\tuple{k^{\dag}\after \cotuple{k, m}, m^{\dag}\after \cotuple{k, m}}  \\
& = &
\tuple{\cotuple{k^{\dag}\after k, k^{\dag}\after m}, 
   \cotuple{m^{\dag}\after k, m^{\dag}\after m}}  \\
& = &
\tuple{\cotuple{\idmap, 0}, \cotuple{0, \idmap}}  \\
& = &
\tuple{\pi_{1}, \pi_{2}} \\
& = &
\idmap.
\end{array}$$
}
\end{proof}

The resulting mapping $\varepsilon\colon H_{1} \rightarrow
\mathcal{S}(\KSub(H)) = \Cat{EA}(\KSub(H), \unitR)$ need not preserve
convex sums. Hence Hilbert spaces do not fit nicely in the dual
adjunctions diagram~\eqref{DualAdjsDiag}. More research is needed to
clarify the situation. In particular, it seems worthwhile to bring
compact and Hausdorff spaces into the picture, like
in~\cite{Keimel08}, and to look for restrictions of the dual
adjunction in Theorem~\ref{ConvEffAlgDualAdjThm}, possibly involving
$C^{*}$-algebras (instead of Hilbert spaces).

\subsubsection*{Acknowledgements}
Thanks to Dion Coumans, Chris Heunen, Bas Spitters and Jorik
Mandemaker for feedback and/or helpful discussions.

\bibliographystyle{plain} 

\begin{thebibliography}{10}

\bibitem{Abramsky91}
S.~Abramsky.
\newblock Domain theory in logical form.
\newblock {\em Ann. Pure \& Appl. Logic}, 51(1/2):1--77, 1991.

\bibitem{BaezS09}
J.C. Baez and M.~Stay.
\newblock Physics, topology, logic and computation: A rosetta stone.
\newblock 2009.
\newblock \url{arXiv/0903.0340}.

\bibitem{BarrW85}
M.~Barr and Ch. Wells.
\newblock {\em Toposes, Triples and Theories}.
\newblock Springer, Berlin, 1985.
\newblock Revised and corrected version available from URL:
  \url{www.cwru.edu/artsci/math/wells/pub/ttt.html}.

\bibitem{BuschGL95}
P.~Busch, M.~Grabowski, and P.~Lahti.
\newblock {\em Operational Quantum Physics}.
\newblock Springer Verlag, Berlin, 1995.

\bibitem{Doberkat06}
E.-E. Doberkat.
\newblock Eilenberg-{Moore} algebras for stochastic relations.
\newblock {\em Inf. \& Comp.}, 204(12):1756--1781, 2006.
\newblock Erratum and addendum in: 206(12):1476--1484, 2008.

\bibitem{DvurecenskijP00}
A.~Dvure\v{c}enskij and S.~Pulmannov{\'a}.
\newblock {\em New Trends in Quantum Structures}.
\newblock Kluwer Acad. Publ., Dordrecht, 2000.

\bibitem{Flood81}
J.~Flood.
\newblock Semiconvex geometry.
\newblock {\em Journ. Austr. Math. Soc.}, Series A 30:496--510, 1981.

\bibitem{FoulisB94}
D.~J. Foulis and M.K. Bennett.
\newblock Effect algebras and unsharp quantum logics.
\newblock {\em Found. Physics}, 24(10):1331--1352, 1994.

\bibitem{FoulisGB98}
D.~J. Foulis, R.J. Greechie, and M.K. Bennett.
\newblock The transition to unigroups.
\newblock {\em Int. Journ. Theor. Physics}, 37(1):45--63, 1998.

\bibitem{Gudder79}
S.~Gudder.
\newblock A general theory of convexity.
\newblock {\em Milan Journal of Mathematics}, 49(1):89--96, 1979.

\bibitem{Heunen10a}
C.~Heunen.
\newblock {\em Categorical Quantum Models and Logics}.
\newblock PhD thesis, Univ. Nijmegen, 2010, to appear.

\bibitem{HeunenJ10a}
C.~Heunen and B.~Jacobs.
\newblock Quantum logic in dagger kernel categories.
\newblock In B.~Coecke, P.~Panangaden, and P.~Selinger, editors, {\em
  Proceedings of the 6th International Workshop on Quantum Programming
  Languages (QPL 2009)}, Elect. Notes in Theor. Comp. Sci. Elsevier, Amsterdam,
  2010.
\newblock Available from \url{http://arxiv.org/abs/0902.2355}.

\bibitem{Johnstone82}
P.T. Johnstone.
\newblock {\em Stone Spaces}.
\newblock Number~3 in Cambridge Studies in Advanced Mathematics. Cambridge
  Univ. Press, 1982.

\bibitem{JohnstoneV91}
P.T. Johnstone and S.~Vickers.
\newblock Preframe presentations present.
\newblock In A.~Carboni, M.C. Pedicchio, and G.~Rosolini, editors, {\em Como
  Conference on Category Theory}, number 1488 in Lect. Notes Math., pages
  193--212. Springer, Berlin, 1991.

\bibitem{Keimel08}
K.~Keimel.
\newblock The monad of probability measures over compact ordered spaces and its
  {E}ilenberg-{M}oore algebras.
\newblock {\em Topology and its Applications}, 156:227--239, 2008.

\bibitem{Kock71b}
A.~Kock.
\newblock Bilinearity and cartesian closed monads.
\newblock {\em Math. Scand.}, 29:161--174, 1971.

\bibitem{Kock71a}
A.~Kock.
\newblock Closed categories generated by commutative monads.
\newblock {\em Journ. Austr. Math. Soc.}, XII:405--424, 1971.

\bibitem{MacLane71}
S.~Mac Lane.
\newblock {\em Categories for the Working Mathematician}.
\newblock Springer, Berlin, 1971.

\bibitem{Manes74}
E.G. Manes.
\newblock {\em Algebraic Theories}.
\newblock Springer, Berlin, 1974.

\bibitem{NeumannM44}
J.~von Neumann and O.~Morgenstern.
\newblock {\em Theory of Games and Economic Behavior}.
\newblock Princeton University Press, 1944.

\bibitem{PulmannovaG98}
S.~Pulmannov{\'a} and S.~Gudder.
\newblock Representation theorem for convex effect algebras.
\newblock {\em Commentationes Mathematicae Universitatis Carolinae},
  39(4):645--659, 1998.
\newblock Available from \url{http://dml.cz/dmlcz/119041}.

\bibitem{Stone49}
M.H. Stone.
\newblock Postulates for the barycentric calculus.
\newblock {\em Ann. Math.}, 29:25--30, 1949.

\bibitem{Swirszcz74}
T.~Swirszcz.
\newblock Monadic functors and convexity.
\newblock {\em Bull. de l'Acad. Polonaise des Sciences. S\'er. des sciences
  math., astr. et phys.}, 22:39--42, 1974.

\bibitem{Vickers89}
S.~Vickers.
\newblock {\em Topology Via Logic}.
\newblock Number~5 in Tracts in Theor. Comp. Sci. Cambridge Univ. Press, 1989.

\end{thebibliography}

\end{document}